\title[Hydrodynamic Limits]{Hydrodynamic limit of a kinetic flocking model with nonlinear velocity alignment}
\author[McKenzie Black]{McKenzie Black}
\address[McKenzie Black]{\newline Hopkins Extreme Materials Institute, \ 
 Johns Hopkins University, Malone Hall, 3400 North Charles St., Baltimore, Maryland 21218, USA}
\email{Mblack39@jhu.edu}
\author[Changhui Tan]{Changhui Tan}
\address[Changhui Tan]{\newline Department of Mathematics, \ 
 University of South Carolina, 1523 Greene St., Columbia, SC 29208, USA}
\email{tan@math.sc.edu}
\thanks{\textit{Acknowledgment.} This work has been supported by the NSF grants DMS-2108264 and DMS-2238219.}
\subjclass[2010]{35Q35,\,35B25,\,35Q70}
\keywords{Kinetic flocking model, Euler-alignment system, nonlinear velocity alignment, hydrodynamic limit, relative entropy}
\newtheorem{theorem}{Theorem}[section]
\newtheorem{lemma}[theorem]{Lemma}
\newtheorem{proposition}[theorem]{Proposition}
\theoremstyle{definition}
\theoremstyle{remark}
\newtheorem{remark}{Remark}[section]
\numberwithin{equation}{section}
\def\R{\mathbb{R}}
\def\T{\mathbb{T}}
\def\pa{\partial}
\def\A{\mathbf{A}}
\def\F{\mathbf{F}}
\def\u{\mathbf{u}}
\def\v{\mathbf{v}}
\def\vbold{\v}
\def\w{\mathbf{w}}
\def\x{\mathbf{x}}
\def\X{\mathbf{X}}
\def\y{\mathbf{y}}
\def\Y{\mathbf{Y}}
\def\z{\mathbf{z}}
\def\grad{\nabla}
\def\eps{\varepsilon}
\def\ue{\mathbf{u}_\eps}
\def\Ee{\mathcal{E}_\eps}
\def\fe{f_\eps}
\def\rhoe{\rho_\eps}
\def\rhot{\tilde{\rho}_\eps}
\def\De{\mathcal{D}_{1\eps}}
\def\Dee{\mathcal{D}_{2\eps}}
\def\Ge{\mathcal{G}_\eps}
\def\Reps{\mathcal{R}_\eps}
\def\Xe{\X_\eps}
\def\S{\mathcal{S}}
\def\etae{\eta_\eps}
\def\I{\mathrm{I}}
\def\II{\mathrm{II}}
\def\V{\mathcal{V}}
\begin{document}
\allowdisplaybreaks

\begin{abstract}
We investigate a class of Vlasov-type kinetic flocking models featuring nonlinear velocity alignment. Our primary objective is to rigorously derive the hydrodynamic limit leading to the compressible Euler system with nonlinear alignment. This study builds upon the work by Figalli and Kang \cite{figalli2018rigorous}, which addressed the scenario of linear velocity alignment using the relative entropy method. The introduction of nonlinearity gives rise to an additional discrepancy in the alignment term during the limiting process. To effectively handle this discrepancy, we employ the monokinetic ansatz in conjunction with the relative entropy approach. Furthermore, our analysis reveals distinct nonlinear alignment behaviors between the kinetic and hydrodynamic systems, particularly evident in the isothermal regime.
\end{abstract}

\maketitle 

\tableofcontents

\section{Introduction}
In this paper, we consider the following Vlasov-type of kinetic flocking model
\begin{equation}\label{eq:kinetic}
 \pa_tf+\v\cdot\grad_\x f+\grad_\v\cdot\big(\F(f) f\big)=0,
\end{equation}
where $f=f(t,\x,\v)$ with $(t,\x,\v)\in\R_+\times\Omega\times\R^d$. The spatial domain $\Omega$ can be either the whole space $\R^d$ or the periodic domain $\T^d$.
The \emph{alignment force} $\F$ is defined as
\begin{equation}\label{eq:kinetF}
    \F(f)(t,\x,\v )=\int_{\Omega\times\R^{d}} \phi(\x-\y)\Phi(\w-\v )f(t,\y,\w)\, d\y\,d\w.
\end{equation}
Here, $\phi$ is the \emph{communication protocol}, representing the strength of pairwise alignment interaction. Throughout the paper, we assume that $\phi$ is radially symmetric, bounded, Lipschitz, and non-increasing along the radial direction. Typical choices are: 
\[\phi(\x)=(1+|\x|)^{-\alpha},\quad \alpha\geq0.\]

The mapping $\Phi : \R^d\to\R^d$ describes the type of alignment. One classical choice is the linear mapping 
\begin{equation}\label{eq:Philinear}
 \Phi(\z)=\z.	
\end{equation}
The corresponding system \eqref{eq:kinetic}-\eqref{eq:Philinear} is often referred as the \emph{Vlasov-alignment system}. It is a kinetic representation of the Cucker-Smale dynamics \cite{cucker2007emergent} that models the flocking phenomena in interacting particle systems. 

A generalization of the Cucker-Smale dynamics was introduced in \cite{ha2010emergent}:
\begin{equation} \label{eq:nCS}
 \begin{cases} 
   \dot\x_i=\v_i,\\
   \dot\v_i= \displaystyle\frac{1}{N}\sum^{N}_{j=1} \phi(\x_i-\x_j)\Phi(\v_j-\v_i),
   \end{cases}
   \quad (\textbf{x}_i,\textbf{v}_i)\in \Omega \times \R^d.
\end{equation}
The system features a nonlinear velocity alignment, where the mapping $\Phi$ takes the form
\begin{equation}\label{eq:Phi}
 \Phi(\z)=|\z|^{p-2}\z,\quad p>2.
\end{equation}
When $p=2$, the mapping $\Phi$ is linear, and \eqref{eq:nCS} reduces to the Cucker-Smale dynamics. For $p\neq2$, the nonlinearity lead to different asymptotic flocking behaviors, as explored in various studies \cite{wen2012flocking,markou2018collision,kim2020complete,black2024asymptotic}.
The system \eqref{eq:kinetic}-\eqref{eq:kinetF} was derived in \cite{carrillo2014local} as a kinetic representation of \eqref{eq:nCS}. The global well-posedness theory was also established in the same work. 

A macroscopic representation of the system \eqref{eq:kinetic}-\eqref{eq:kinetF}  is the following compressible Euler system with alignment interactions:
\begin{equation} \label{eq:EAS}
	\begin{cases}
	\pa_t\rho+\grad_\x\cdot(\rho\u)= 0,\\
    \pa_t(\rho\u)+\grad_\x\cdot(\rho\u\otimes\u)= \rho\A[\rho,\u],
	\end{cases}
\end{equation}
where the alignment force $\A[\rho,\u]$ is defined as
\begin{equation}\label{eq:alignment}
	\A[\rho,\u](t,\x)=\int_{\R^d} \phi(\x-\y)\Phi(\u(t,\y)-\u(t,\x))\rho(t,\y)\,d\y.
\end{equation}
With the linear mapping \eqref{eq:Philinear}, the system \eqref{eq:EAS}-\eqref{eq:alignment} is known as the \emph{Euler-alignment equations}. The system has been extensively investigated in the last decade, see e.g. \cite{tadmor2014critical,carrillo2016critical,shvydkoy2017eulerian,shvydkoy2017eulerian2,do2018global,kiselev2018global,tan2020euler,miao2021global,tan2021eulerian,lear2022existence,leslie2023sticky,li2024global}. For more results on the Euler-alignment system, we refer to the recent book by Shvydkoy \cite{shvydkoy2021dynamics}.

We are interested in the connection between the kinetic equations \eqref{eq:kinetic}-\eqref{eq:kinetF} and macroscopic system \eqref{eq:EAS}-\eqref{eq:alignment}.
The formal derivation was first established in \cite{ha2008particle} when the mapping $\Phi$ is linear \eqref{eq:Philinear}. The Euler-alignment equations were derived by taking zeroth and first moments of $f$ on $\v$, and formally apply the \emph{mono-kinetic ansatz}
\begin{equation}\label{eq:monokinetic}
 f(t,\x,\v)=\rho(t,\x)\delta_{\v=\u(t,\x)},
\end{equation}
where $\delta$ denotes the Dirac delta function.

The rigorous justification of the hydrodynamic limit is discussed by Figalli and Kang in \cite{figalli2018rigorous}. The starting point of their analysis is the kinetic flocking equation:
\begin{equation}\label{eq:kineteps}
\pa_t\fe+\v\cdot\grad_\x \fe+\grad_\v\cdot\big(\F(\fe) \fe\big)=\frac1\eps\grad_\v\cdot\big((\v-\ue)\fe\big).
\end{equation}
In addition to the alignment interaction \eqref{eq:kinetF}, there is another linear relaxation term on the right-hand side of \eqref{eq:kineteps}. As the parameter $\eps$ tends to zero, the relaxation term enforces the mono-kinetic ansatz \eqref{eq:monokinetic}. This relaxation term was introduced in \cite{karper2014strong,kang2015asymptotic}, viewed as local alignment.

 The macroscopic density and momentum associated with $\fe$ are denoted by $\rhoe$ and $\rhoe\ue$ respectively. These are defined as the zeroth and first moments of $\fe$ with respect to velocity $\v$, expressed as:
\begin{equation}\label{eq:rhoepe}
    \rhoe(t,\x)=\int_{\R^d} \fe(t,\x,\v)\, d\v , \quad  \rhoe\ue(t,\x)=\int_{\R^d} \vbold \fe(t,\x,\v)\, d\v.
\end{equation}
Hence, the velocity $\v$ is relaxed to the macroscopic velocity $\ue$ given by:
\begin{equation}\label{eq:ue}
    \ue(t,\x):=\frac{\int_{\R^{d}} \v \fe(t,\x,\v )\,d\v}{\int_{\R^{d}} \fe(t,\x,\v)\,d\v}.
\end{equation}

From \eqref{eq:kineteps} with \eqref{eq:kinetF} and \eqref{eq:Philinear}, the dynamics of $\rhoe$ and $\rhoe\ue$ can be derived, resulting in the following system:
\begin{equation}\label{eq:rhouelinear}
 \begin{cases}
  \pa_t\rhoe+\grad_\x\cdot(\rhoe \ue)= 0,\\
  \pa_t(\rhoe\ue)+\grad_\x\cdot(\rhoe\ue\otimes\ue+\Reps)=\rhoe(\x)\displaystyle\int_{\Omega} \phi(\x-\y)(\ue(\y)-\ue(\x))\rhoe(\y)d\y,
   \end{cases}
\end{equation}
where $\Reps$ represents the Reynold's stress tensor 
\begin{equation}\label{eq:Reps}
   \Reps(t,\x)=\int_{\R^d}(\v -\ue)\otimes(\v -\ue)\fe(t,\x,\v )\,d\v. 	
\end{equation}

Formally applying the mono-kinetic ansatz \eqref{eq:monokinetic} to \eqref{eq:Reps} results in $\Reps\equiv0$. Consequently, \eqref{eq:rhouelinear} transforms into the pressure-less Euler-alignment equations \eqref{eq:EAS} with \eqref{eq:Philinear}.

The rigorous derivation of the hydrodynamic limit, however, is non-trivial. In \cite{figalli2018rigorous}, a \emph{relative entropy method} is employed to rigorously establish the limit:
\[\fe(t,\x,\v)\to f(t,\x,\v)=\rho(t,\x)\delta_{\v=\u(t,\x)}\]
in an appropriate sense. Here, $(\rho,\rho\u)$ constitutes the solution to the Euler-alignment equations.

In this paper, our primary objective is to generalize the findings on the hydrodynamic limit to the case of nonlinear velocity alignment described by \eqref{eq:Phi} with $p>2$.

The formal derivation for the hydrodynamic limit involving general choices of $p$ has recently been undertaken by Tadmor in \cite{tadmor2023swarming}.
The alignment force \eqref{eq:alignment} with $\Phi$ in \eqref{eq:Phi} is  referred as \emph{$p$-alignment}.
The limiting system \eqref{eq:EAS} has been less thoroughly understood compared to the Euler-alignment equations (when $p=2$), primarily due to the introduced nonlinearity. 
Recent investigations, as reported in \cite{tadmor2023swarming,lu2023hydrodynamic,black2024asymptotic}, have shed light on intriguing asymptotic behaviors stemming from the nonlinear nature of $p$-alignment.

One significant challenge in rigorously justifying this limit arises from the nonlinearity, which introduces an additional term in the momentum equation:
\begin{equation}\label{eq:rhoue}
  \pa_t(\rhoe\ue)+\grad_\x\cdot(\rhoe\ue\otimes\ue+\Reps)=\rhoe(\x)\displaystyle\int_{\Omega} \phi(\x-\y)\Phi(\ue(\y)-\ue(\x))\rhoe(\y)d\y+\Ge.
\end{equation}
The discrepancy term $\Ge=\Ge(t,\x)$ takes the form
\begin{equation}\label{eq:Ge}
\int_{\Omega\times\R^{2d}}\phi(\x-\y)\big(|\w-\v|^{p-2}-|\ue(\y)-\ue(\x)|^{p-2}\big)(\w-\v)\fe(\y,\w)\fe(\x,\v)\,d\y d\v d\w.
\end{equation}
We refer to Section \ref{sec:formal} for a formal derivation of the discrepancy $\Ge$. It is noteworthy that when the mapping $\Phi$ is linear ($p=2$), the discrepancy $\Ge$ does not exist, i.e., $\Ge\equiv0$. Conversely, when $p>2$, obtaining additional control over the term $\Ge$ becomes imperative.

Formally inserting the mono-kinetic ansatz \eqref{eq:monokinetic} into \eqref{eq:Ge} results in $\Ge\equiv0$. Consequently, the expectation is that  the discrepancy vanishes as $\eps\to0$. However, achieving a rigorous limit requires delicate control of $\Ge$ through the relative entropy and the linear relaxation. This undertaking will be thoroughly investigated in the course of this paper.

We are ready to state our main result on the rigorous derivation of the hydrodynamic limit.
\begin{theorem}\label{thm:hydro}
Let $\fe$ and $(\rho,\u)$ be the solutions to \eqref{eq:kineteps} and \eqref{eq:EAS} respectively in the time interval $[0,T_*]$, with well-prepared initial data. Then
\[
\fe(t,\x,\v)\rightharpoonup f(t,\x,\v)=\rho(t,\x)\delta_{\v=\u(t,\x)},\quad	\text{as}\,\, \eps\to0.
\]
\end{theorem}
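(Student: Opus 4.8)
The plan is to adapt the relative entropy method of Figalli and Kang \cite{figalli2018rigorous} to the nonlinear setting, with the monokinetic ansatz providing the extra leverage needed to control the discrepancy term $\Ge$. I would define the relative entropy functional
\begin{equation}\label{eq:relent}
\Ee(t)=\int_{\Omega\times\R^d}\tfrac12|\v-\u(t,\x)|^2\fe(t,\x,\v)\,d\x\,d\v+\int_\Omega\big(\text{thermodynamic part}\big)\,d\x,
\end{equation}
measuring the deviation of $\fe$ from the target $\rho\,\delta_{\v=\u}$. The first task is to derive a Gr\"onwall-type differential inequality for $\Ee$. Differentiating in time and using the kinetic equation \eqref{eq:kineteps} for $\fe$ together with the Euler-alignment system \eqref{eq:EAS} for $(\rho,\u)$, I expect the derivative $\tfrac{d}{dt}\Ee$ to split into three families of terms: transport/flux contributions that reproduce $\Ee$ up to a constant (handled by the regularity of the limit solution $(\rho,\u)$ on $[0,T_*]$), a strongly dissipative contribution of size $-\tfrac1\eps\int|\v-\ue|^2\fe$ coming from the relaxation term, and the alignment terms including the troublesome $\Ge$.

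The heart of the argument is the alignment estimate. Here I would compare the kinetic alignment force against the macroscopic $p$-alignment force $\A[\rho,\u]$, and absorb the difference into the relative entropy. The key observation is that $\Ge$ in \eqref{eq:Ge} involves the difference $|\w-\v|^{p-2}(\w-\v)-|\u(\y)-\u(\x)|^{p-2}(\u(\y)-\u(\x))$ integrated against $\fe\otimes\fe$. Writing $\w-\v=(\u(\y)-\u(\x))+(\w-\u(\y))-(\v-\u(\x))$ and using the boundedness of $\phi$, I would estimate the increment of the vector field $\z\mapsto|\z|^{p-2}\z$ by a local-Lipschitz bound of the form $\bigl||\z_1|^{p-2}\z_1-|\z_2|^{p-2}\z_2\bigr|\lesssim(|\z_1|^{p-2}+|\z_2|^{p-2})|\z_1-\z_2|$. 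This reduces $\Ge$ to integrals of velocity fluctuations $|\v-\u(\x)|$ and $|\w-\u(\y)|$ weighted by powers of the velocities. The dissipation term then pays for the leading fluctuation $\int|\v-\u|^2\fe$, while the remaining higher moments must be controlled by uniform-in-$\eps$ moment bounds on $\fe$ propagated from the well-prepared initial data.

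The main obstacle, as the authors anticipate, is precisely the control of $\Ge$: unlike the linear case $p=2$ where $\Ge\equiv0$, the nonlinearity produces terms scaling like $|\v-\u|^{p-1}$ rather than $|\v-\u|^2$, so they are not directly dominated by the quadratic dissipation $\tfrac1\eps\int|\v-\u|^2\fe$ unless $p=2$. To close the estimate I expect one must interpolate these higher-order fluctuation moments against the quadratic dissipation and a uniformly bounded moment $\int|\v|^{q}\fe$, yielding a bound of the form $|\Ge|\lesssim \Ee+C\eps^{\theta}$ for some $\theta>0$, possibly at the cost of extracting an $\eps$-power from the $\tfrac1\eps$ dissipation via a Young-type inequality. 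The isothermal regime, where the pressure is not negligible, will require additional care in matching the thermodynamic part of $\Ee$ and likely explains the authors' remark about distinct alignment behaviors between the kinetic and hydrodynamic systems.

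Once the inequality $\tfrac{d}{dt}\Ee\le C\,\Ee+o_\eps(1)$ is established, I would conclude by Gr\"onwall that $\Ee(t)\le e^{Ct}\bigl(\Ee(0)+o_\eps(1)\bigr)$ on $[0,T_*]$. Since the data are well-prepared, $\Ee(0)\to0$ as $\eps\to0$, giving $\Ee(t)\to0$ uniformly on $[0,T_*]$. Finally I would translate the vanishing of the relative entropy into the claimed weak convergence $\fe\rightharpoonup\rho\,\delta_{\v=\u}$ by testing against smooth compactly supported functions and using that control of $\int\tfrac12|\v-\u|^2\fe$ together with the density convergence forces the velocity distribution to concentrate on $\v=\u(t,\x)$.
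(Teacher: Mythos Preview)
Your overall strategy matches the paper's: a relative-entropy Gr\"onwall argument in which the nonlinear discrepancy is absorbed by trading it against the strong $\tfrac1\eps$ dissipation via a Young-type inequality, yielding an $O(\eps^\theta)$ remainder. The paper works with the \emph{macroscopic} relative entropy $\etae=\tfrac12\int\rhoe|\ue-\u|^2\,d\x$ rather than your kinetic version $\tfrac12\int|\v-\u|^2\fe$, but it explicitly remarks that the kinetic variant (due to Shvydkoy) can be adapted with the same discrepancy control, so this is a legitimate alternative route, not a gap.

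A few points where your sketch is looser than the paper's execution and would need tightening. First, there is no ``thermodynamic part'' in the mono-kinetic limit: the target is pressureless, so drop that term from \eqref{eq:relent}. Second, the paper does not rely on mere moment bounds to tame the $|\z|^{p-2}$ weights; it uses a maximum principle (Proposition~\ref{prop:MP}) showing the velocity support stays inside a ball of radius $\V^0$ uniformly in $\eps$, which is what makes the local-Lipschitz estimate on $\Phi$ effective and reduces the fluctuation integrand to $|\v-\ue|^q$ with $q=\min\{p-2,1\}\le1$ (Lemma~\ref{lem:pdiff}), not $|\v-\u|^{p-1}$. Third, to close the Gr\"onwall loop the paper must also compare $\A[\rhoe,\ue]$ with $\A[\rho,\u]$, which produces a density-mismatch term controlled by $W_1(\rho,\rhoe)$; this is handled by a separate flow-map estimate (Lemma~\ref{lem:W1}) feeding $W_1^2(\rho,\rhoe)$ back into $\int_0^t\etae$. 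Your sketch hides this inside ``transport/flux contributions handled by regularity,'' but it is a genuine coupled estimate, not a triviality.
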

\begin{remark}
 The complete details of the theorem, including the definitions of solutions to the systems, the interpretation of well-prepared initial data, and the notation of convergence, will be presented later in the main context. Refer to Theorem \ref{thm:main} for the comprehensive results.
\end{remark}
\begin{remark}
 In this paper, we primarily focus on the $p$-alignment nonlinearity given by \eqref{eq:Phi}. However, it is worth noting that Theorem \ref{thm:hydro} remains valid for a more general class of nonlinearities of the form
 \begin{equation}\label{eq:Phigeneral}
  \Phi(\z) = h(|\z|)\z,	 	
 \end{equation}
 where $h$ is an increasing function on $\R_+$ such that $h(0)=0$, and $h\in C^q(\R_+)$ for $q\in(0,1]$. 
 For details on how H\"older/Lipschitz continuity on $b$ is incorporated into our argument, refer to Remark \ref{rmk:Phi}.
\end{remark}

It is crucial to underscore that the mono-kinetic ansatz \eqref{eq:monokinetic} plays a pivotal role in establishing the same nonlinearity $\Phi$ in the resulting system \eqref{eq:EAS}.
Notably, the alignment interaction within the limiting system does not necessarily conform to the same nonlinearity in general. We illustrate this aspect in the subsequent discussion.

One commonly considered equilibrium state is the Gaussian function
\begin{equation}\label{eq:isothermal}
 f(t,\x,\v)=\rho(t,\x)\cdot (2\pi)^{-\frac{d}2} e^{-\frac{|\v-\u(t,\x)|^2}{2}},
\end{equation}
known as the \emph{isothermal ansatz}. Plugging in this ansatz to \eqref{eq:Reps} would yield
\[\Reps(t,\x)=\rhoe(t,\x)\,\mathbb{I}_d,\]
where $\mathbb{I}_d$ denotes the $d$-by-$d$ identity matrix.
In the case of a linear mapping $\Phi$, the limiting system corresponds to the Euler-alignment equations with isothermal pressure. Specifically, the momentum equation takes the form
\[
  \pa_t(\rho\u)+\grad_\x\cdot(\rho\u\otimes\u)+\grad_\x\rho=\rhoe(\x)\displaystyle\int_{\Omega} \phi(\x-\y)(\ue(\y)-\ue(\x))\rhoe(\y)d\y.
\]
The rigorous derivation of this type of hydrodynamic limit has been explored in \cite{karper2015hydrodynamic}, stemming from the following Vlasov-Fokker-Planck equation with alignment
\begin{equation}\label{eq:feFP}
\pa_t\fe+\v\cdot\grad_\x \fe+\grad_\v\cdot\big(\F(\fe) \fe\big)=\frac1\eps\grad_\v\cdot\big((\v-\ue)\fe\big)+\frac1\eps\Delta_\v\fe,	
\end{equation}
where the right-hand side enforces the isothermal ansatz \eqref{eq:isothermal} as $\eps\to0$.

For a nonlinear mapping $\Phi$ in \eqref{eq:Phi} with $p>2$, a crucial observation is that $\Ge$ does not tend to zero as $\eps\to0$. Consequently, the alignment interaction in the limiting system is \emph{not} $p$-alignment. We present the isothermal hydrodynamic limit in Section \ref{sec:isothermal}, leaving the rigorous justification for future investigation.

We would also like to highlight a distinct type of communication protocol $\phi$, known as \emph{singular communications}, where $\phi$ is unbounded at the origin. For instance, $\phi(\x)=|\x|^{-\alpha}$ with $\alpha>0$. The kinetic equation \eqref{eq:kinetic}-\eqref{eq:kinetF} with singular communication $\phi$ and linear mapping $\Phi$ has been investigated in \cite{mucha2018cucker,choi2023local}. A recent paper \cite{fabisiak2023inevitable} suggests that singular communications enforce mono-kinetic ansatz \eqref{eq:monokinetic}. A rigorous study of the hydrodynamic limit in this context would be interesting. Some relevant studies have been conducted by Poyato and Soler in \cite{poyato2017euler}.

The rest of the paper is organized as follows. 
Section \ref{sec:VA} presents some preliminary results on the kinetic flocking equation \eqref{eq:kineteps}.
Section \ref{sec:hydro} consists of a formal derivation of the hydrodynamic limit from \eqref{eq:kineteps} to \eqref{eq:EAS}, a local well-posedness theory for the limiting system \eqref{eq:EAS}, and the complete statement of our main result, Theorem \ref{thm:hydro}.
The proof of the theorem is furnished in Section \ref{sec:Main}, leveraging the relative entropy method. The key innovation lies in controlling the discrepancy $\Ge$ through the mono-kinetic structure enforced by the linear relaxation.
Finally, Section \ref{sec:isothermal} discusses the hydrodynamic limit with the isothermal ansatz \eqref{eq:isothermal}. Notably, the limiting system has an alignment force that is different from the $p$-alignment.
 
\section{The Vlasov-alignment system}\label{sec:VA}
In this section, we state a collection of preliminary results on the Vlasov-alignment system \eqref{eq:kineteps}. Recall the dynamics
\begin{equation}\label{eq:VAeps}
\begin{cases}
\pa_t\fe+\v\cdot\grad_\x \fe+\grad_\v\cdot\big(\F(\fe) \fe\big)=\displaystyle\frac1\eps\grad_\v\cdot\big((\v-\ue)\fe\big),\\
\fe(0,\x,\v)=\fe^0(\x,\v),	
\end{cases}
\end{equation}
where the alignment force $\F$ is defined in \eqref{eq:kinetF}.

We assume non-negative and compactly supported initial data
\begin{equation}\label{eq:feinit}
	\fe^0(\x,\v)\geq0,\,\, \text{diam}(\text{supp}_{\x}\fe^0)\leq \S^0<\infty,\,\text{and}\,\,\text{diam}(\text{supp}_{\v}\fe^0)\leq \V^0<\infty,
\end{equation}
where $(\S^0, \V^0)$ are finite numbers that are independent with $\eps$. 
For simplicity, we assume unit total mass
\begin{equation}\label{eq:fenormal}
 \int_{\Omega\times\R^d}\fe^0(\x,\v)\,d\x d\v=1.
\end{equation}
Note that the total mass is preserved in time.

\subsection{Local and global well-posedness}
The global well-posedness theory for classical solutions to \eqref{eq:kinetic} follows from standard argument for Vlasov-type equations. It requires Lipschitz continuity in $(\x,\v)$ of the forcing terms $\F(\fe)$. See \cite[Theorem 3.3]{ha2008particle} for the case $p=2$, and \cite{carrillo2014local} for more general discussions.

For equation \eqref{eq:VAeps} with the linear relaxation term, additional a priori control of $\grad_x\ue$ is required to ensure Lipschitz continuity of the term $\frac1\eps(\v-\ue)$.
\begin{proposition}\label{prop:WPc}
 Let $\fe^0\in(C^1\cap W^{1,\infty})(\Omega\times\R^d)$ and satisfies \eqref{eq:feinit}. There exists a unique classical solution $\fe\in C^1([0,T)\times\Omega\times\R^d)$ to equation \eqref{eq:VAeps}, provided
 \begin{equation}\label{eq:uecond}
 \|\grad_\x\ue\|_{L^\infty([0,T)\times\Omega)}<+\infty.	
 \end{equation}
 \end{proposition}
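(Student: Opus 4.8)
The plan is to treat \eqref{eq:VAeps} as a Vlasov-type conservative transport equation and reduce the problem to the standard well-posedness theory already invoked for \eqref{eq:kinetic} (see \cite{ha2008particle,carrillo2014local}), the only genuinely new ingredient being the linear relaxation term. Moving that term to the left-hand side, $\fe$ solves
\[
\pa_t\fe+\v\cdot\grad_\x\fe+\grad_\v\cdot\big(\mathbf{G}(\fe)\,\fe\big)=0,\qquad \mathbf{G}(\fe)(t,\x,\v):=\F(\fe)(t,\x,\v)-\tfrac1\eps\big(\v-\ue(t,\x)\big),
\]
whose effective force is $\mathbf{G}(\fe)$. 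The classical theory produces a unique $C^1$ solution, constructed by the method of characteristics
\[
\tfrac{d}{dt}\X=\mathbf{V},\qquad \tfrac{d}{dt}\mathbf{V}=\F(\fe)(t,\X,\mathbf{V})-\tfrac1\eps\big(\mathbf{V}-\ue(t,\X)\big),
\]
provided $\mathbf{G}(\fe)$ is Lipschitz in $(\x,\v)$ on the relevant (compact) support. Thus the whole proof reduces to establishing this Lipschitz bound together with the propagation of compact support that keeps $\mathbf{V}$ in a fixed compact set.

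First I would propagate the support. Since $\phi$ is bounded and $\Phi(\z)=|\z|^{p-2}\z$ maps bounded sets to bounded sets, $\F(\fe)$ is bounded on $\mathrm{supp}\,\fe$; combined with the dissipative sign of $-\tfrac1\eps(\mathbf{V}-\ue)$, a Gronwall argument along characteristics bounds $|\mathbf{V}(t)|$ on $[0,T)$, so the velocity support diameter $\V(t)$ stays finite. The spatial support then spreads at the finite speed $\dot\X=\mathbf{V}$ and remains bounded by $\S^0+\int_0^T\V(s)\,ds$. On this compact region I would verify the two Lipschitz estimates separately. The alignment force $\F(\fe)$ is Lipschitz in $(\x,\v)$ exactly as in the $p=2$ case: $\phi$ is Lipschitz by assumption, and $\Phi$ is $C^1$ for $p>2$ with derivative controlled on the compact velocity set, so this part needs no new input. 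The relaxation term $\tfrac1\eps(\v-\ue(t,\x))$ is affine, hence Lipschitz, in $\v$; its Lipschitz continuity in $\x$ is precisely the content of the hypothesis \eqref{eq:uecond}. This is the single point at which the new assumption enters.

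With $\mathbf{G}(\fe)$ Lipschitz, existence and uniqueness follow by a standard iteration: given an iterate $\fe^{(n)}$ one computes $\ue^{(n)}$ and $\F(\fe^{(n)})$, solves the resulting linear transport equation for $\fe^{(n+1)}$ by its characteristics, and shows the map is a contraction for $T$ small in a suitable Banach space of compactly supported densities, using the Lipschitz bounds above; the short-time solution is then continued on all of $[0,T)$ so long as \eqref{eq:uecond} persists. Uniqueness follows from a Gronwall estimate on the difference of two solutions along their characteristics, governed by the same Lipschitz constant, of size $\sim\tfrac1\eps+\|\grad_\x\ue\|_{L^\infty}$, which is finite for fixed $\eps$. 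The $C^1$ regularity is preserved because the flow inherits the regularity of the force and $\fe^0\in C^1\cap W^{1,\infty}$.

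The main obstacle is the nonlocal, nonlinear dependence of $\ue$ on $\fe$ through the quotient of moments \eqref{eq:ue}, and in particular the division by $\rhoe$, which degenerates at the boundary of the support. I would handle this by noting that the relaxation term $(\v-\ue)\fe$ is supported where $\fe>0$, hence where $\rhoe$ is bounded below on compact subregions, so $\ue$ is well defined and as regular there as the moments themselves, while its behaviour off the support is irrelevant to the dynamics. The a priori bound \eqref{eq:uecond} then plays the role of a continuation criterion that simultaneously closes the iteration and rules out the only mechanism by which the Lipschitz constant of $\mathbf{G}(\fe)$—and hence the $C^1$ solution—could break down in finite time. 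In this sense the proposition is a conditional statement, reducing well-posedness of \eqref{eq:VAeps} to the single a priori control of $\grad_\x\ue$.
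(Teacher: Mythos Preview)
Your proposal is correct and matches the paper's approach: the paper does not supply a detailed proof of this proposition but simply points to the standard Vlasov well-posedness arguments in \cite{ha2008particle,carrillo2014local} and observes that the only new ingredient for \eqref{eq:VAeps} is the Lipschitz control of the relaxation term $\tfrac1\eps(\v-\ue)$, which is exactly what \eqref{eq:uecond} provides. Your sketch fleshes out precisely this outline---characteristics, propagation of compact support, Lipschitz bounds on $\F(\fe)$ from the regularity of $\phi$ and $\Phi$, and the role of \eqref{eq:uecond} as the continuation criterion---so there is nothing to correct or contrast.
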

 
In \cite{karper2013existence}, the authors construct weak solutions to \eqref{eq:VAeps} with $p=2$ by regularizing $\ue$ and obtaining uniform control analogous to \eqref{eq:uecond}. We state the following version of their theorem for general $p$-alignment.

\begin{proposition}\label{prop:WPw}
 Let $\fe^0\in L^\infty(\Omega\times\R^d)$ and satisfies \eqref{eq:feinit}. Then there exists a weak solution $\fe\in L^\infty([0,T)\times\Omega\times\R^d)$ to equation \eqref{eq:VAeps} in the sense of distribution, that is, 
\begin{align}\label{eq:weaksol}
&\int_0^T\int_{\Omega\times\R^d}\fe\Big(\pa_t\varphi+\v\cdot\grad_\x\varphi+\F(\fe)\cdot\grad_\v\varphi+\frac1\eps(\ue-\v)\cdot\grad_\v\varphi\Big)d\x d\v dt\\
&\qquad\qquad+\int_{\Omega\times\R^d}\fe^0\varphi(0,\cdot)\,d\x d\v=0,\nonumber
\end{align}
 for any $\varphi\in C_c^\infty([0,T)\times\Omega\times\R^d)$.
\end{proposition}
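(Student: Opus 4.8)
The plan is to follow the regularization-and-compactness strategy of Karper, Mellet, and Trivisa \cite{karper2013existence}, adapting it to the $p$-alignment nonlinearity \eqref{eq:Phi}. The two nonlinear terms in \eqref{eq:VAeps} are the alignment force $\F(\fe)$ and the local relaxation $\frac1\eps(\v-\ue)\fe$; the latter is the delicate one, since $\ue$ is a quotient that is neither continuous under weak convergence nor defined on the vacuum set. I would therefore build the solution as a limit of solutions to a regularized problem in which $\ue$ is replaced by a smoothed, vacuum-free velocity field, and then pass to the limit using uniform a priori bounds together with velocity-averaging compactness.

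First I would regularize. Fixing a standard mollifier $\psi_\delta$ in $\x$, I replace $\ue$ by
\[
 \ue^\delta=\frac{(\rhoe\ue)*\psi_\delta}{(\rhoe*\psi_\delta)+\delta},
\]
where $\rhoe\ue=\int\v\fe\,d\v$ is the momentum; then $\ue^\delta$ is smooth, bounded, and bounded away from the vacuum singularity (for fixed $\delta$). For the regularized equation the transport field $\v\mapsto\F(\fe^\delta)+\frac1\eps(\ue^\delta-\v)$ is Lipschitz in $(\x,\v)$, so a solution $\fe^\delta$ may be produced along the lines of Proposition \ref{prop:WPc}; concretely I would set up a Picard iteration $\fe^{\delta,n}\mapsto\fe^{\delta,n+1}$ in which each step solves a \emph{linear} transport equation with the regularized velocity built from $\fe^{\delta,n}$, and close the scheme by a contraction/Schauder argument.

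Next I would collect the a priori estimates uniform in $\delta$. Mass is conserved by \eqref{eq:fenormal}. The velocity support stays bounded on $[0,T)$: both the relaxation toward the averaged velocity $\ue^\delta$ and the $p$-alignment force pull $\v$ toward the velocity mean, so $\mathrm{diam}(\mathrm{supp}_\v\fe^\delta)$ remains controlled by a constant depending only on $\V^0$. In particular $|\w-\v|^{p-2}$ stays bounded, which tames the nonlinearity of $\Phi$ and keeps $\F(\fe^\delta)$ bounded and Lipschitz. Writing the equation in conservation form and integrating along characteristics, the divergence of the transport field in $\v$ is bounded, since the relaxation contributes $-d/\eps$ and $\grad_\v\cdot\F(\fe^\delta)=-(d+p-2)\int_{\Omega\times\R^d}\phi(\x-\y)|\w-\v|^{p-2}\fe^\delta\,d\y\,d\w\le0$; hence $\|\fe^\delta(t)\|_{L^\infty}\le\|\fe^0\|_{L^\infty}e^{C_\eps t}$ uniformly in $\delta$. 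The spatial support then grows at most linearly and stays bounded on $[0,T)$.

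With these bounds, weak-$*$ compactness yields $\fe^\delta\rightharpoonup\fe$ in $L^\infty$. The alignment term passes to the limit readily: since $\phi$ is bounded and Lipschitz and the supports are compact, $\F(\fe^\delta)\to\F(\fe)$ strongly (the convolution regularizes), so $\F(\fe^\delta)\fe^\delta\rightharpoonup\F(\fe)\fe$. The \emph{main obstacle} is the relaxation term $\ue^\delta\fe^\delta$, because $\ue$ depends nonlinearly on $\fe$ through a quotient. Here I would invoke a velocity-averaging lemma: the moments $\rhoe^\delta=\int\fe^\delta\,d\v$ and $\int\v\fe^\delta\,d\v$ are strongly precompact, so they converge strongly, and consequently $\ue^\delta\to\ue$ a.e. on $\{\rhoe>0\}$, while on the vacuum set the contribution vanishes because $\fe$ carries no mass there. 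This gives $(\ue^\delta-\v)\fe^\delta\rightharpoonup(\ue-\v)\fe$. Inserting all limits into the weak formulation \eqref{eq:weaksol} written for $\fe^\delta$ (valid for every $\varphi\in C_c^\infty$) and letting $\delta\to0$ produces the desired weak solution. The only place where $p>2$ differs from $p=2$ is the boundedness of $|\w-\v|^{p-2}$, which the uniform velocity-support bound guarantees, so the argument of \cite{karper2013existence} carries over to general $p$-alignment.
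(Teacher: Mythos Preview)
The paper does not supply its own proof of this proposition: it merely cites \cite{karper2013existence} for the case $p=2$ and asserts that the same construction carries over to general $p$-alignment. Your proposal is therefore not competing with a paper proof but rather filling in the sketch the paper omits, and it does so along exactly the lines the authors have in mind---regularize $\ue$ to recover the Lipschitz setting of Proposition~\ref{prop:WPc}, derive $\delta$-uniform bounds (mass, velocity-support, $L^\infty$), and pass to the limit via velocity averaging on the moments to handle the quotient $\ue$. Your observation that the only $p$-specific ingredient is the boundedness of $|\w-\v|^{p-2}$, which follows from the uniform velocity-support bound (cf.\ Proposition~\ref{prop:MP}), is precisely the point the paper is implicitly relying on when it says the KMT argument extends to $p>2$.
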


\begin{remark}
Shvydkoy \cite{shvydkoy2021dynamics} studied the hydrodynamic limits from \eqref{eq:kinetic} with a regularized local relaxation term $\frac1\eps\grad_\v\cdot((v-\u_\eps^\delta)\fe)$, so that condition \eqref{eq:uecond} holds for any fixed $\delta>0$. Applying Proposition \ref{prop:WPc}, the kinetic equation has a unique classical solution $\fe^\delta$. The hydrodynamic limit can then be studied by letting $\eps, \delta\to0$ appropriately.
For \eqref{eq:VAeps}, there is no uniqueness guaranteed for the weak solution. We will show the hydrodynamic limit starting from any weak solutions $\fe$ that satisfy \eqref{eq:weaksol}.
\end{remark}

We define the kinetic energy (or entropy)
\begin{equation}\label{eq:Eeps}
  \Ee(t)=\frac12\int_{\Omega\times\R^d}|\v|^2\fe(t,\x,\v)\,d\x d\v.
\end{equation}
The energy is dissipated by the alignment force $\F$, as well as the local relaxation. Define the kinetic enstrophy
\begin{align}
  \De(t)&=\frac{1}{2}\int_{\Omega^2\times\R^{2d}} \phi(\x-\y)|\w-\v|^p\fe(t,\x,\v)\fe(t,\y,\w)\,d\x d\y d\v d\w,\label{eq:D1}\\
  \Dee(t)&=\int_{\Omega\times\R^d}|\v -\ue|^2\fe(t,\x,\v)\,d\x d\v.\label{eq:D2}
\end{align} 
We have the following bound on the energy dissipation.

\begin{proposition}\label{prop:energy}
For any $\eps>0$, let $\fe$ be a weak solution to \eqref{eq:VAeps}. We have
    \begin{equation}\label{eq:energydecay}
        \frac{d}{dt}\Ee(t) \leq -\De(t)- \frac{1}{\eps}\Dee(t),
    \end{equation}
where the energy $\Ee$ and enstropy $\De$, $\Dee$ are defined in \eqref{eq:Eeps}, \eqref{eq:D1} and \eqref{eq:D2}, respectively.
\end{proposition}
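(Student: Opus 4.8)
The plan is to use the kinetic energy weight $\tfrac12|\v|^2$ against the weak formulation \eqref{eq:weaksol} and track the three resulting contributions. Formally, multiplying \eqref{eq:VAeps} by $\tfrac12|\v|^2$ and integrating over $\Omega\times\R^d$, the time-derivative term produces $\frac{d}{dt}\Ee$; the transport term $\int\tfrac12|\v|^2\,\v\cdot\grad_\x\fe$ vanishes after integrating by parts in $\x$ (the flux $\tfrac12|\v|^2\v$ is $\x$-independent, and boundary terms disappear by periodicity on $\T^d$, or by the compact $\x$-support propagated from the velocity support on $\R^d$); and the two force terms are treated by integrating by parts in $\v$, moving $\grad_\v(\tfrac12|\v|^2)=\v$ onto the fluxes. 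This yields, at the formal level,
\[
\frac{d}{dt}\Ee(t)=\int_{\Omega\times\R^d}\v\cdot\F(\fe)\fe\,d\x d\v-\frac1\eps\int_{\Omega\times\R^d}\v\cdot(\v-\ue)\fe\,d\x d\v,
\]
and it remains to identify the two integrals on the right with $-\De$ and $-\frac1\eps\Dee$.

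For the alignment term I would insert the definition \eqref{eq:kinetF} of $\F$ and symmetrize by exchanging $(\x,\v)\leftrightarrow(\y,\w)$, using the radial symmetry $\phi(\x-\y)=\phi(\y-\x)$. Since $\Phi(\z)=|\z|^{p-2}\z$ is odd, the symmetrized integrand collapses to
\[
\tfrac12(\v-\w)\cdot\Phi(\w-\v)=-\tfrac12(\w-\v)\cdot\Phi(\w-\v)=-\tfrac12|\w-\v|^p,
\]
so the alignment term equals exactly $-\De(t)$, matching \eqref{eq:D1}. This is precisely the mechanism by which the $p$-alignment generates the enstrophy $\De$, and it is the only place where the oddness of $\Phi$ and the symmetry of $\phi$ enter.

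For the relaxation term, the key observation is that $\ue=\ue(\x)$ is independent of $\v$ and, by its definition \eqref{eq:ue}, $\int_{\R^d}(\v-\ue)\fe\,d\v=\rhoe\ue-\rhoe\ue=0$. Writing $\v\cdot(\v-\ue)=|\v-\ue|^2+\ue\cdot(\v-\ue)$ and integrating in $\v$ first, the cross term drops out, leaving $\int\v\cdot(\v-\ue)\fe=\Dee$, which gives the $-\frac1\eps\Dee$ contribution.

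The main obstacle is technical rather than algebraic: $\tfrac12|\v|^2$ is not an admissible test function in \eqref{eq:weaksol}, being neither smooth nor compactly supported, so the formal identity must be justified for weak solutions. I would run the computation against truncated weights $\psi_R(\v)=\tfrac12|\v|^2\chi(\v/R)$, with $\chi\in C_c^\infty$ equal to $1$ on the unit ball and supported in the ball of radius $2$, paired with a time profile to recover the differential-in-time form. The cutoff produces error terms supported on $\{R\le|\v|\le 2R\}$ carrying a factor $\grad_\v\chi(\v/R)$; these are controlled by the velocity moments of $\fe$ and vanish as $R\to\infty$. Passing additionally from the regularized approximations of \eqref{eq:VAeps} (as in \cite{karper2013existence}) to the weak solution $\fe$, the exact energy balance degrades to the inequality \eqref{eq:energydecay} by lower semicontinuity of the dissipation functionals $\De$, $\Dee$ and of the energy $\Ee$. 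Since all the algebraic identities above are exact, this limiting argument is the only substantive content beyond the formal computation.
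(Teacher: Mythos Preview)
Your proposal is correct and follows essentially the same route as the paper: the formal computation (transport term vanishes, alignment term symmetrized via $(\x,\v)\leftrightarrow(\y,\w)$ to produce $-\De$, relaxation term reduced to $-\tfrac1\eps\Dee$ using $\int(\v-\ue)\fe\,d\v=0$) is identical, and for weak solutions both you and the paper pass through smooth approximations and recover the inequality by lower semicontinuity. One small remark: since Proposition~\ref{prop:MP} guarantees $\text{supp}_\v\fe(t)$ stays within a ball of radius $\V^0$, your velocity cutoff $\psi_R$ is in fact inactive once $R>\V^0$, so the truncation step is simpler than you suggest.
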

\begin{proof}
Suppose $\fe$ is a classical solution to \eqref{eq:VAeps}. We utilize \eqref{eq:VAeps} and get 
    \begin{align*}
             \frac{d}{dt}\Ee(t)=&\,\frac{1}{2}\int_{\Omega\times\R^d}|\v|^2\partial_t\fe \,d\x d\v \\
             =&\,-\int_{\Omega\times\R^d} \frac{|\v|^2\v}{2} \cdot \grad_\x \fe\,d\x d\v -\int_{\Omega\times\R^d} \frac{|\v|^2}{2}\grad_\v\cdot (\F(\fe)\fe)\,d\x d\v\\
             &\,+\frac{1}{\eps}\int_{\Omega\times\R^d} \frac{|\v|^2}{2}\grad_\v \cdot((\v -\ue)\fe)\,d\x d\v\\
             =&\,\int_{\Omega\times\R^d} \v\cdot\F(\fe)\fe\,d\x d\v -\frac{1}{\eps}\int_{\Omega\times\R^d} \v\cdot (\v -\ue)\fe\,d\x d\v\\
             =&\,\int_{\Omega^2\times\R^{2d}} \phi(\x-\y)\,\v\cdot(\w-\v)|\w-\vbold |^{p-2}\fe(\x,\v )\fe(\y,\w) \, d\x d\y d\v d\w\\
             &\,-\frac{1}{\eps}\int_{\Omega\times\R^d} (\v-\ue)\cdot(\v-\ue)\fe\,d\x d\v\\
             =&\,-\De-\frac{1}{\eps}\Dee.
    \end{align*}
Here, we have used the identity $\int_{\R^d}(\v-\ue)\fe\,d\v=0$ in the penultimate equality, and symmetrized in $(\v,\w)$ for the last equality.     
        
For weak solutions, we apply the calculation above to a sequence of smooth approximations, and pass to the limit to obtain the inequality \eqref{eq:energydecay}.
\end{proof}

\subsection{Asymptotic flocking behavior}
In this part, we present several properties of the solution $\fe$ to \eqref{eq:VAeps} concerning its support in $(\x,\v)$. We define the variation of position and velocity as follows:
\begin{equation}\label{eq:DV}
\S_\eps(t)= \text{diam}(\text{supp}_\x\fe(t)),\quad
\V_\eps(t)= \text{diam}(\text{supp}_\v\fe(t)). 
\end{equation}

We begin by stating a maximum principle that will be utilized throughout this paper.
\begin{proposition}\label{prop:MP}
 Suppose $\fe$ is a weak solution to \eqref{eq:VAeps}, with initial data $\fe^0$ satisfying \eqref{eq:feinit}. Then we have
 \begin{equation}\label{eq:MP}
 	\V_\eps(t)\leq \V^0,\quad \text{and}\quad \S_\eps(t)\leq \S^0+t\V^0,
 \end{equation}
 for any $\eps>0$ and $t\in[0,T)$.
\end{proposition}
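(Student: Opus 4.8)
The plan is to establish both estimates first for classical solutions, where the method of characteristics applies, and then recover the general weak case by the regularization-and-passage-to-the-limit procedure already used for Proposition~\ref{prop:energy}. For a classical solution, $\fe(t)$ is the pushforward of $\fe^0$ under the phase-space flow generated by the characteristic system
\begin{equation*}
  \dot\X = \mathbf{V}, \qquad \dot{\mathbf{V}} = \F(\fe)(t,\X,\mathbf{V}) - \frac1\eps\big(\mathbf{V}-\ue(t,\X)\big),
\end{equation*}
so $\operatorname{supp}\fe(t)$ is exactly the image of $\operatorname{supp}\fe^0$ under this flow. It therefore suffices to control, uniformly over characteristics, the spread of $\X$ and of $\mathbf{V}$.

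For the velocity estimate, fix a unit vector $\mathbf{e}$ and set $M(t)=\sup_{(\x,\v)\in\operatorname{supp}\fe(t)}\v\cdot\mathbf{e}$. Along a characteristic realizing this supremum one has $\w\cdot\mathbf{e}\le\mathbf{V}\cdot\mathbf{e}$ for every $(\y,\w)\in\operatorname{supp}\fe(t)$; since $\phi\ge0$ and $|\w-\mathbf{V}|^{p-2}\ge0$, every integrand defining $\F(\fe)(t,\X,\mathbf{V})\cdot\mathbf{e}$ is nonpositive, so the alignment contribution is $\le0$. Likewise $\ue(t,\X)\cdot\mathbf{e}$ is a weighted average of $\v\cdot\mathbf{e}$ over the fiber at $\X$, whence $\ue(t,\X)\cdot\mathbf{e}\le\mathbf{V}\cdot\mathbf{e}$ and the relaxation contribution $-\tfrac1\eps(\mathbf{V}-\ue)\cdot\mathbf{e}$ is also $\le0$. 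Thus $\tfrac{d}{dt}(\mathbf{V}\cdot\mathbf{e})\le0$ at the extremal characteristic, which forces $M(t)$ to be non-increasing; the symmetric argument shows $\inf_{\operatorname{supp}}\v\cdot\mathbf{e}$ is non-decreasing. Since $\V_\eps(t)=\sup_{|\mathbf{e}|=1}\big(\sup\v\cdot\mathbf{e}-\inf\v\cdot\mathbf{e}\big)$, taking the supremum over $\mathbf{e}$ yields $\V_\eps(t)\le\V_\eps(0)\le\V^0$.

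For the position estimate, project onto $\mathbf{e}$ and use $\dot\X\cdot\mathbf{e}=\mathbf{V}\cdot\mathbf{e}$. Hence the spread of $\X\cdot\mathbf{e}$ grows at rate at most $\sup\v\cdot\mathbf{e}-\inf\v\cdot\mathbf{e}\le\V_\eps(t)\le\V^0$; integrating in time and taking the supremum over $\mathbf{e}$ gives $\S_\eps(t)\le\S^0+t\V^0$.

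The main technical obstacle is to make the extremal computation rigorous, since $M(t)$ is a supremum over a time-dependent set. I would handle this with the upper Dini derivative and a Danskin-type identity: $M$ is locally Lipschitz on the existence interval (the characteristics are $C^1$ there by Proposition~\ref{prop:WPc}), and $D^+M(t)\le\sup\{\tfrac{d}{dt}(\mathbf{V}\cdot\mathbf{e}):\ \mathbf{V}\cdot\mathbf{e}=M(t)\}\le0$. Equivalently, using the boundedness of $\phi$, the bound $|\w-\mathbf{V}|^{p-2}\le(\V^0)^{p-2}$, and unit mass, one derives $\tfrac{d}{dt}(\mathbf{V}\cdot\mathbf{e})\le C(M(t)-\mathbf{V}\cdot\mathbf{e})$ along every characteristic and runs a Gronwall comparison, closing the velocity bound by a short continuity argument. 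Finally, since weak solutions are limits of the regularized approximants of Proposition~\ref{prop:WPw} and the support bounds are stable under weak-$*$ convergence, the estimates pass to the limit, completing the proof.
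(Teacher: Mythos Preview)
The paper does not include a proof of this proposition; it states the result and immediately afterward refers to \cite{black2024asymptotic} for the argument, noting in addition that the same method yields the sharper system of differential inequalities \eqref{eq:SDDI}. Your characteristic-based approach, projecting onto a fixed direction $\mathbf{e}$ and showing that the directional extrema of velocity are monotone along the flow (with the alignment and relaxation forces both pushing inward at the extremal point), is exactly the standard maximum-principle argument that underlies \eqref{eq:SDDI}; it is correct and is essentially what the paper is invoking.

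One small remark on presentation: the paper records not merely $\V_\eps'(t)\le 0$ but the quantitative decay $\V_\eps'(t)\le -2^{2-p}\phi(\S_\eps(t))\V_\eps(t)^{p-1}$. Your extremal computation already contains this: at the maximal characteristic, the alignment contribution $\F(\fe)\cdot\mathbf{e}$ is not only nonpositive but can be bounded below in magnitude using $|\w-\mathbf{V}|^{p-2}(\w-\mathbf{V})\cdot\mathbf{e}$ and the fact that $\phi(\X-\y)\ge\phi(\S_\eps(t))$ for $(\y,\w)$ in the support. This is not needed for \eqref{eq:MP} itself, but it is worth noting since the paper uses the refined form later to discuss asymptotic rates. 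Also, in your ``equivalently'' alternative (the Gronwall route with $\tfrac{d}{dt}(\mathbf{V}\cdot\mathbf{e})\le C(M(t)-\mathbf{V}\cdot\mathbf{e})$), the constant $C$ depends on $(\V^0)^{p-2}$, so the continuity/bootstrap step you mention is indeed required; the direct extremal argument avoids that circularity and is cleaner.
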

The maximum principle \eqref{eq:MP} holds for general nonlinearity \eqref{eq:Phigeneral}. For $p$-alignment \eqref{eq:Phi}, refined estimates can be obtained.
Indeed, a similar argument as in \cite{black2024asymptotic} yields the following system of inequalities on $(\S_\eps,\V_\eps)$:
\begin{equation}\label{eq:SDDI}
\begin{cases}
	\S_\eps'(t)\leq \V_\eps(t),\\
	\V_\eps'(t)\leq -2^{2-p}\phi(\S_\eps(t))\V_\eps(t)^{p-1},
\end{cases}
\quad\text{with}\quad
\begin{cases}
	\S_\eps(0)\leq \S^0,\\
	\V_\eps(0)\leq \V^0.
\end{cases}
\end{equation}
The analysis of \eqref{eq:SDDI} reveals asymptotic alignment and flocking behavior in the system. For instance, assuming $\phi$ has a positive lower bound $\underline{\phi}>0$, we obtain
\[\V_\eps'(t)\leq -2^{2-p}\underline{\phi}\V_\eps(t)^{p-1},\quad \V_\eps(0)\leq\V^0,\]
which implies velocity alignment with an algebraic decay rate for $p>2$. Specifically, we have
\[\V_\eps(t)\leq \Big((\V^0)^{-(p-2)}+2^{2-p}(p-2)\underline{\phi}\,t\Big)^{-\frac{1}{p-2}}\lesssim t^{-\frac{1}{p-2}}.\]
This is notably different from the case of linear mapping ($p=2$), where the decay rate is exponential.

A more interesting setup occurs when $\Omega=\R^d$ and $\phi$ decays to zero like $\phi(r)\sim r^{-\alpha}$. The system \eqref{eq:VAeps} exhibits different asymptotic behaviors for various choices of $p$ and $\alpha$. Detailed discussions are provided in \cite{black2024asymptotic}.

\section{Hydrodynamic limit}\label{sec:hydro}

\subsection{A formal derivation}\label{sec:formal}
We start with a formal derivation of the hydrodynamic limit from the kinetic system \eqref{eq:kineteps} to the Euler-alignment system \eqref{eq:EAS}. The derivation was first established in \cite{ha2008particle} for the linear alignment case $p=2$, and in \cite{tadmor2023swarming} for general nonlinear alignment with $p>2$. For the sake of completeness, we present a formal derivation in this paper, under our notations.

We start with computing the zeroth and first moments of $\fe$. Integrating \eqref{eq:kineteps} in $\v$ yields the continuity equation
\[\pa_t\rhoe+\grad_\x\cdot(\rhoe \ue) = 0.\]
Multiplying \eqref{eq:kineteps} by $\v$ and integrating in $\v$, we obtain the momentum equation
\begin{equation}\label{eq:momentumeps}
\pa_t(\rhoe\ue)+\grad_x\cdot\int_{\R^{d}}\v \otimes \v \fe\,d\v =\int_{\R^d}\F(\fe)\fe \,d\v.	
\end{equation}
We rewrite the second moment by
\[\int_{\R^{d}}\v \otimes \v \fe\,d\v=\rhoe\ue\otimes\ue+\mathcal{R}_\eps,\]
where $\Reps$ is the Reynold's stress tensor defined as
\[\Reps=\int_{\R^d}(\v -\ue)\otimes(\v -\ue)\fe\,d\v.\]

For the alignment term on the right hand side of \eqref{eq:momentumeps}, if $p=2$, it can be represented by the macroscopic quantities $(\rhoe,\ue)$. Indeed, in this case, we have
\begin{align*}
\int_{\R^d}\F(\fe)\fe \,d\v =&\int_{\Omega\times\R^d\times\R^d}\phi(\x-\y)(\w-\v)\fe(\x,\v)\fe(\y,\w)\,d\y d\v d\w\\
=&\int_\Omega\phi(\x-\y)(\ue(\y)-\ue(\x))\rhoe(\x)\rhoe(\y)\,d\y=\rhoe\A(\rhoe,\ue).
\end{align*}
When $p>2$, the alignment term depends on higher moments of $\fe$. We decompose $\F$ into two parts
\begin{align*}
    &\F(\fe)(\x,\v)=\int_{\Omega\times\R^{d}} \phi(\x-\y)|\ue(\y)-\ue(\x)|^{p-2}(\w-\v)\fe(t,\y,\w)\, d\y d\w\\
    &\quad +\int_{\Omega\times\R^{d}} \phi(\x-\y)\big(|\w-\v|^{p-2}-|\ue(\y)-\ue(\x)|^{p-2}\big)(\w-\v)\fe(t,\y,\w)\, d\y d\w\\
    &=:\F_1(\fe)(\x,\v)+\F_2(\fe)(\x,\v).
\end{align*}
The first term $\F_1$ is linear in $\v$. Hence, we have
\begin{align*}
\int_{\R^d}\F_1(\fe)\fe \,d\v =&\int_\Omega\phi(\x-\y)|\ue(\y)-\ue(\x)|^{p-2}(\ue(\y)-\ue(\x))\rhoe(\x)\rhoe(\y)\,d\y\\
=&\rhoe(\x)\int_\Omega\phi(\x-\y)\Phi(\ue(\y)-\ue(\x))\rhoe(\y)\,d\y=\rhoe\A(\rhoe,\ue).
\end{align*}
For the remaining term $\F_2$, we denote
\begin{align*}
&\Ge(t,\x):=\int_{\R^d}\F_2(\fe(t,\x,\v))\fe(t,\x,\v) \,d\v\\
&=\int_{\Omega\times\R^{2d}}\phi(\x-\y)\big(|\w-\v|^{p-2}-|\ue(\y)-\ue(\x)|^{p-2}\big)(\w-\v)\fe(\y,\w)\fe(\x,\v)\,d\y d\v d\w.
\end{align*}

We summarize the above computation and obtain the following dynamics of $(\rhoe,\ue)$:
\begin{equation} \label{eq:EASeps}
	\begin{cases}
	\pa_t\rhoe+\grad_\x\cdot(\rhoe\ue)= 0,\\
    \pa_t(\rhoe\ue)+\grad_\x\cdot(\rhoe\ue\otimes\ue)+\grad_\x\cdot\Reps= \rhoe\A[\rhoe,\ue]+\Ge.
	\end{cases}
\end{equation}

Now, we take a formal limit $\eps\to0$. The leading order $\mathcal{O}(\eps^{-1})$ term in \eqref{eq:kineteps} is the local relaxation
\[\frac1\eps\grad_\v\cdot\big((\v-\ue)\fe\big)=0.\]
This implies that the limiting profile $f$ is mono-kinetic. More precisely, if 
\[\rhoe\to\rho,\quad \rhoe\ue\to\rho\u\]
in some appropriate sense, then we have
\[\fe(t,\x,\v)\to f(t,\x,\v)=\rho(t,\x)\delta_{\v=\u(t,\x)}.\]
Moreover, the mono-kinetic structure of $f$ implies that
\[\Reps\to0, \quad \Ge\to0.\]
Therefore, the limit quantities $(\rho,\u)$ solve the Euler-alignment system \eqref{eq:EAS}.

\subsection{The Euler equations with $p$-alignment}\label{sec:EASp}
For the Euler-alignment system with $p=2$, local and global well-posedness theories have been well-established for smooth solutions in Sobolev spaces $H^s(\Omega)\times H^{s+1}(\Omega)$, as discussed in, for example, \cite{tadmor2014critical}. 
The theory is based on the following non-conservative form of the Euler-alignment system:
\begin{equation} \label{eq:EASnon}
	\begin{cases}
	\pa_t\rho+\grad_\x\cdot(\rho\u)= 0,\\
    \pa_t\u+\u\cdot\grad_\x\u = \A[\rho,\u],
	\end{cases}
\end{equation}
The systems \eqref{eq:EAS} and \eqref{eq:EASnon} are equivalent if $\rho$ stays away from zero. Moreover, any smooth solution to \eqref{eq:EASnon} is also a solution to \eqref{eq:EAS}.

To be consistent with the conditions \eqref{eq:feinit} and \eqref{eq:fenormal} for the kinetic equation \eqref{eq:VAeps}, we assume initial data $(\rho^0,\u^0)$ satisfy
\begin{equation}\label{eq:rhouinit}
 \text{diam}(\text{supp}\rho^0)\leq\S^0<\infty, \quad \text{diam}(\text{Range}(\u^0))\leq\V^0<\infty,\quad \int_\Omega\rho^0(\x)\,d\x=1.
\end{equation}

One crucial aspect of the global well-posedness theory to \eqref{eq:EASnon} is the control of the Lipschitz bound on the velocity $[\u(t,\cdot)]_{Lip}$. Subsequently, the propagation of higher Sobolev norms follows from energy estimates.

However, for the case of general nonlinear alignment, obtaining smooth solutions is more challenging due to the non-smooth behavior of $\Phi$ near the origin. Here, we present a well-posedness theory for solutions in the space $(L^1\cap L^\infty)(\Omega)\times W^{1,\infty}(\Omega)$.

\begin{proposition}\label{prop:EAS}
Suppose the initial data $(\rho^0,\u^0)$ satisfy \eqref{eq:rhouinit} and
 \[(\rho^0,\u^0)\in (L^1\cap L^\infty)(\Omega)\times W^{1,\infty}(\Omega).\]
Then, there exists a time $T$ such that the system \eqref{eq:EASnon} with \eqref{eq:alignment} admits a unique strong solution
\[(\rho,\u)\in C([0,T),(L^1\cap L^\infty)(\Omega))\times C([0,T), W^{1,\infty}(\Omega)).\]
Moreover, the time span of the solution can be extended as long as
\begin{equation}\label{eq:gradubound}
 \|\grad_\x\u\|_{L^\infty([0,T)\times\Omega)}\leq M,
\end{equation}
where $M$ is a finite number.
\end{proposition}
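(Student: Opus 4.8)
The plan is to prove local well-posedness for the non-conservative system \eqref{eq:EASnon} via a fixed-point / iteration scheme in the space $X_T := C([0,T),(L^1\cap L^\infty)(\Omega))\times C([0,T), W^{1,\infty}(\Omega))$, and then obtain the continuation criterion \eqref{eq:gradubound} from an a priori estimate that closes as long as $\|\grad_\x\u\|_{L^\infty}$ stays bounded. The starting observation is that the alignment operator $\A[\rho,\u]$ defined in \eqref{eq:alignment} is reasonably well-behaved despite the non-smoothness of $\Phi(\z)=|\z|^{p-2}\z$ at the origin: because $p>2$, the map $\Phi$ is $C^1$ away from $0$ and, more importantly, $\Phi$ is \emph{locally Lipschitz} with a H\"older/Lipschitz modulus on bounded sets (since $\nabla\Phi(\z)\sim|\z|^{p-2}$ is bounded on compacta). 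Since the maximum principle \eqref{eq:MP} and the assumption \eqref{eq:rhouinit} guarantee that the velocity range stays bounded, all velocity differences $\u(\y)-\u(\x)$ lie in a fixed ball, so $\Phi$ may be treated as a globally Lipschitz map for the purposes of the estimates.

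First I would set up the iteration: given an approximating pair $(\rho^{(n)},\u^{(n)})$, solve the linear transport equation $\pa_t\rho^{(n+1)}+\div(\rho^{(n+1)}\u^{(n)})=0$ for the density along the flow generated by $\u^{(n)}$, and solve $\pa_t\u^{(n+1)}+\u^{(n)}\cdot\grad_\x\u^{(n+1)}=\A[\rho^{(n)},\u^{(n)}]$ for the velocity. Propagating $L^1\cap L^\infty$ bounds on $\rho$ is standard transport theory (the $L^\infty$ norm is controlled by $\int_0^t\|\div\u^{(n)}\|_{L^\infty}$, and $L^1$ mass is conserved). For the velocity I would differentiate the transport equation for $\u$ in $\x$ to get an equation for $\grad_\x\u$ of the form $\pa_t(\grad\u)+\u\cdot\grad(\grad\u)=-\grad\u\cdot\grad\u+\grad_\x\A[\rho,\u]$, and estimate $\|\grad_\x\u(t)\|_{L^\infty}$ via Gronwall, using that $\grad_\x\A[\rho,\u]$ is controlled by $\|\phi\|_{Lip}$, $\|\rho\|_{L^1}$, the Lipschitz bound on $\Phi$ over the fixed velocity ball, and $\|\grad_\x\u\|_{L^\infty}$ itself. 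These estimates yield a uniform bound on $(\rho^{(n)},\u^{(n)})$ in $X_T$ for $T$ small, and then a contraction estimate in the weaker norm $C([0,T),L^1)\times C([0,T),L^\infty)$ (losing one derivative, as is typical for quasilinear transport) gives convergence to a unique fixed point; uniqueness in $X_T$ follows from the same contraction argument applied to any two solutions.

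The main obstacle I anticipate is the regularity of the alignment term $\grad_\x\A[\rho,\u]$ in the velocity estimate. Differentiating \eqref{eq:alignment} in $\x$ produces two pieces: one hitting $\phi(\x-\y)$, which is harmless since $\phi$ is Lipschitz and $\rho\in L^1$; and one hitting $\Phi(\u(\y)-\u(\x))$ through the inner $\grad_\x\u(\x)$, which requires $\Phi$ to be $C^1$ with bounded derivative on the relevant velocity ball. Because $\grad\Phi(\z)=|\z|^{p-2}\big(\I_d+(p-2)\tfrac{\z\otimes\z}{|\z|^2}\big)$ is bounded (indeed continuous, vanishing at $0$) for $p>2$, this term is controlled linearly by $\|\grad_\x\u\|_{L^\infty}$ times $\|\phi\|_{L^\infty}\|\rho\|_{L^1}$, which is exactly what Gronwall needs. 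The delicate point, and the reason one cannot hope for $C^1$ velocity in general, is that $\grad\Phi$ is only H\"older (not Lipschitz) near $0$, so in the contraction estimate the difference $\Phi(\u_1(\y)-\u_1(\x))-\Phi(\u_2(\y)-\u_2(\x))$ must be estimated using only the Lipschitz-on-bounded-sets property; this forces the contraction to be carried out in the low-norm space rather than in $W^{1,\infty}$, and is presumably where the role of the modulus $h\in C^q$ from \eqref{eq:Phigeneral} enters (cf.\ Remark \ref{rmk:Phi}).

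Finally, the continuation criterion \eqref{eq:gradubound} follows by inspecting the a priori estimates: the $L^\infty$ bound on $\rho$ grows at most like $\exp\big(\int_0^t\|\grad_\x\u\|_{L^\infty}\big)$, and the $W^{1,\infty}$ bound on $\u$ closes through a Gronwall inequality whose coefficient is controlled by $\|\grad_\x\u\|_{L^\infty}$ together with the fixed constants $\|\phi\|_{Lip}$, $\|\rho\|_{L^1}=1$, and the velocity-ball Lipschitz bound on $\Phi$. Hence as long as $\|\grad_\x\u\|_{L^\infty([0,T)\times\Omega)}\leq M<\infty$, all norms in $X_T$ remain finite, so the solution extends beyond $T$ by a standard continuation argument.
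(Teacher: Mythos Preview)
Your a priori estimates are essentially the same as the paper's: differentiate the $\u$-equation, bound $\grad_\x\A[\rho,\u]$ using $[\phi]_{Lip}$, $\|\phi\|_{L^\infty}$, the $(p-1)(\V^0)^{p-2}$ bound on $\grad\Phi$ over the velocity ball, and close a Riccati-type inequality for $\|\grad_\x\u\|_{L^\infty}$; the $L^\infty$ bound on $\rho$ then follows by the exponential-of-divergence formula. The continuation criterion \eqref{eq:gradubound} falls out of this exactly as you say.

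Where you genuinely diverge from the paper is in the uniqueness argument. You propose an Eulerian contraction in the low norm $C_tL^1\times C_tL^\infty$, losing one derivative. The paper instead works in \emph{Lagrangian} coordinates: it introduces the flow maps $\X_1,\X_2$ associated to two candidate solutions, rewrites the alignment integral via the change of variables $\widetilde{\y}=\X_i(t,\y)$ so that $\rho_i(t,\widetilde{\y})\,d\widetilde{\y}=\rho^0(\y)\,d\y$, and then derives a closed Gronwall system for $\delta_\X(t)=\sup_\x|\X_1-\X_2|$ and $\delta_\u(t)=\sup_\x|\u_1(\X_1)-\u_2(\X_2)|$. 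The mean value theorem on $\Phi$ (with $|\grad\Phi(\xi)|\leq (p-1)(\V^0)^{p-2}$ thanks to the maximum principle) handles the nonlinearity, and uniqueness of $(\rho,\u)$ is read off from $\delta_\X\equiv\delta_\u\equiv0$. This Lagrangian route sidesteps the main technical nuisance in your plan: in Eulerian variables, estimating $\|\rho_1-\rho_2\|_{L^1}$ when $\rho_i\in L^\infty$ and the advecting fields differ requires either DiPerna--Lions renormalization or an equivalent device, whereas the flow-map argument needs only that $\u_i\in W^{1,\infty}$ to define $\X_i$. Conversely, your iteration scheme is more explicit about \emph{existence} than the paper, which essentially records the a priori bound and invokes Cauchy--Lipschitz for the lifespan without spelling out the construction.
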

\begin{proof}
From \eqref{eq:EASnon}, we obtain the dynamics of the velocity 
\begin{equation}\label{eq:udyn}
(\pa_t+\u\cdot\grad_\x)\u=\A[\rho,\u].	
\end{equation}
Applying gradient to the equation yields
\[(\pa_t+\u\cdot\grad_\x)\grad_\x\u=-(\grad_\x\u)^2+\grad_\x\A[\rho,\u].\]
We estimate the $p$-alignment as follows:
\begin{align*}
\big|\grad_\x\A[\rho,\u]\big|\leq&\,\int_\Omega \big|\grad\phi(\x-\y)\big|\cdot\big|\Phi(\u(\y)-\u(\x))\big|\cdot\rho(\y)\,d\y\\
&+\int_\Omega \phi(\x-\y)\cdot\Big|\grad\Phi(\u(\y)-\u(\x))\Big|\cdot\big|\grad\u(\x)\big|\cdot\rho(\y)\,d\y\\
\leq&\,[\phi]_{Lip}\cdot(\V^0)^{p-1}+\|\phi\|_{L^\infty}\cdot (p-1)(\V^0)^{p-2}\cdot\|\grad_\x\u\|_{L^\infty}.
\end{align*}
This leads to the estimate on $\grad_\x\u$:
\[\frac{d}{dt}\|\grad_\x\u(t,\cdot)\|_{L^\infty}\leq \|\grad_\x\u(t,\cdot)\|_{L^\infty}^2+\|\phi\|_{L^\infty}\cdot (p-1)(\V^0)^{p-2}\cdot\|\grad_\x\u\|_{L^\infty}+[\phi]_{Lip}\cdot(\V^0)^{p-1}.\]
Apply Cauchy-Lipschitz theorem, there exists a time $T>0$ such that $\|\grad_\x\u(t,\cdot)\|_{L^\infty}$ is bounded for any $t\in[0,T]$. Furthermore, \eqref{eq:gradubound} holds.

Note that $\|\u(t,\cdot)\|_{L^\infty}\leq\|\u^0\|_{L^\infty}$ by maximum principle (argued similarly as in Proposition \ref{prop:MP}). Consequently, we obtain an a priori bound on $\u(t)$ in $W^{1,\infty}(\Omega)$.

For the density, $\|\rho(t,\cdot)\|_{L^1}$	
  is conserved in time due to the conservation of mass. Given a Lipschitz velocity field \eqref{eq:gradubound}, $\|\rho(t,\cdot)\|_{L^\infty}$ has the a priori bound
  \[\|\rho(t,\cdot)\|_{L^\infty}\leq \|\rho^0\|_{L^\infty}e^{\int_0^t\|\grad_\x\u(s,\cdot)\|_{L^\infty}\,ds}\leq \|\rho^0\|_{L^\infty} e^{Mt},\]
for any $t\in[0,T]$. 

Next, we turn to prove uniqueness. Let $(\rho_1,\u_1)$ and $(\rho_2,\u_2)$ be two solutions to \eqref{eq:EASnon} with same initial data $(\rho^0,\u^0)$. Assume \eqref{eq:gradubound} holds for $\u_1$ and $\u_2$. Then the flow maps $\X_1(t,\x)$ and $\X_2(t,\x)$ with
\[\partial_t \X_i(t,\x) = \u_i(t,\X_i(t,\x)),\quad \X_i(0,\x)=\x,\quad i=1,2,\]
are well-defined on $[0,T]\times\Omega$. Let us denote
\[\delta_\X(t):=\sup_{\x\in\Omega}\big|\X_1(t,\x)-\X_2(t,\x)\big|,\quad
\delta_\u(t):=\sup_{\x\in\Omega}\big|\u_1(t,\X_1(t,\x))-\u_2(t,\X_2(t,\x))\big|.
\]
Clearly, we have for almost all $t\in[0,T]$,
\begin{equation}\label{eq:dx}
\frac{d}{dt}\delta_\X(t)\leq \delta_\u(t).
\end{equation}

Applying \eqref{eq:udyn} to $(\rho_i, \u_i)$ and evaluating at $(t, \X_i(t,\x))$, we obtain
\begin{align*}
 &\frac{d}{dt} \u_i(t,\X_i(t,\x)) = \A[\rho_i,\u_i](t,\X_i(t,\x))\\
 & = \int_\Omega \phi\big(\X_i(t,\x)-\widetilde{\y}\big)\,\Phi\Big(\u_i(t,\widetilde{\y})-\u_i(t,\X_i(t,\x))\Big)\rho_i(t,\widetilde{\y})\,d\widetilde{\y}\\
 & = \int_\Omega \phi\big(\X_i(t,\x)-\X_i(t,\y)\big)\,\Phi\Big(\u_i(t,\X_i(t,\y))-\u_i(t,\X_i(t,\x))\Big)\rho^0(\y)\,d\y.
\end{align*}
Here we change variable $\widetilde{\y}=\X_i(t,\y)$ and apply the $\rho$-equation in \eqref{eq:EASnon} to get
\[\rho_i(t,\widetilde{\y})\,d\widetilde{\y}=\rho^0(\y)\,d\y.\]

For simplicity, we suppress the $t$-dependence and use the following shortcut notations:
\[\X_i:=\X_i(t,\x), \quad \Y_i:=\X_i(t,\y).\]

Compute the difference
\begin{align}
& \frac{d}{dt}\big(\u_1(\X_1)- \u_2(\X_2)\big)= \int_\Omega \Big(\phi(\X_1-\Y_1)-\phi(\X_2-\Y_2)\Big)\Phi\big(\u_1(\Y_1)-\u_1(\X_1)\big)\rho^0(\y)\,d\y\label{eq:du12}\\
& + \int_\Omega \phi(\X_2-\Y_2)\Big(\Phi\big(\u_1(\Y_1)-\u_1(\X_1)\big)-\Phi\big(\u_2(\Y_2)-\u_2(\X_2)\big)\Big)\rho^0(\y)\,d\y =: H_1+H_2.\nonumber
\end{align}
We now estimate term by term. For $H_1$,
\begin{align*}
|H_1| & \leq \int_\Omega [\phi]_{Lip}|(\X_1-\X_2)-(\Y_1-\Y_2)|\cdot \Phi\big(\u_1(\Y_1)-\u_1(\X_1)\big)\cdot\rho^0(\y)\,d\y\\
&\leq [\phi]_{Lip}\cdot 2\delta_\X\cdot (\V^0)^{p-1}.
\end{align*}
For $H_2$, we have
\begin{align*}
|H_2| & = \left|\int_\Omega \phi(\X_2-\Y_2)\cdot\grad\Phi(\xi)\Big(\big(\u_1(\Y_1)-\u_2(\Y_2)\big)-\big(\u_1(\X_1)-\u_2(\X_2)\big)\Big)\cdot\rho^0(\y)\,d\y\right|\\
&\leq \|\phi\|_{L^\infty}\cdot (p-1) (\V^0)^{p-2}\cdot 2\delta_\u.
\end{align*}
Here $\xi$ lies in between $\u_1(\Y_1)-\u_1(\X_1)$ and $\u_2(\Y_2)-\u_2(\X_2)$. From maximum principle analogous to \eqref{eq:MP} (see \cite{black2024asymptotic}), we have 
$|\u_i(\Y_i)-\u_i(\X_i)|\leq \V^0$, for $i=1,2$. Therefore, $|\xi|\leq\V^0$, leading to the last inequality above.

Applying both estimates to \eqref{eq:du12}, we deduce
\begin{equation}\label{eq:du}
\frac{d}{dt}\delta_\u(t)\leq C \big(\delta_\X(t)+\delta_\u(t)\big),	
\end{equation}
for almost all $t\in[0,T]$, where the constant $C$ depends on $\phi, \V^0$, and $p$.

We put together \eqref{eq:dx} and \eqref{eq:du}, and obtain
\[\frac{d}{dt}\big(\delta_\X(t)+\delta_\u(t)\big)\leq (C+1) \big(\delta_\X(t)+\delta_\u(t)\big).\]
Since $\delta_\X(0)=\delta_\u(0)=0$, we conclude with
\[\delta_\X(t)+\delta_\u(t)\leq \big(\delta_\X(0)+\delta_\u(0)\big)e^{(C+1)t}=0,\]
namely $\delta_\X(t)=\delta_\u(t)=0$ for any $t\in[0,T]$. Therefore,
\[\X_1(t,\x)=\X_2(t,\x)=:\X,\]
and we conclude with uniqueness:
\begin{align*}
\u_1(t,\X) &= \u_1(t,\X_1(t,\x)) = \u_2(t,\X_2(t,\x)) = \u_2(t,\X),\\
\rho_1(t,\X) &= \rho^0(\x)e^{\int_0^t\grad_\x\cdot\u_1(\tau,\X_1(\tau,\x)) d\tau} = \rho^0(\x)e^{\int_0^t\grad_\x\cdot\u_2(\tau,\X_2(\tau,\x)) d\tau}=\rho_2(t,\X),
\end{align*}
for any $t\in[0,T]$ and $\X\in\Omega$. 
\end{proof}

\subsection{Statement of the main result}\label{sec:statement}
Our main goal is to establish a rigorous derivation of the hydrodynamic limit. We consider the following well-prepared initial data $\fe^0$ satisfying \eqref{eq:feinit} where $(\S^0,\V^0)$ are independent of $\eps$. Moreover, $\fe^0$ is close to the initial data $(\rho^0,\u^0)$ of the limiting system \eqref{eq:EASnon}, in the sense
\begin{equation}\label{eq:initapp}
 W_1(\fe^0,f^0)<\eps,	
\end{equation}
where $f^0$ is defined as
\[f^0(\x,\v)=\rho^0(\x)\delta_{\v=\u^0(\x)},\]
and $W_1$ is the 1-Wasserstein metric. It can be defined through the dual representation 
\[ 
W_1(f,g)=\sup_{[\varphi]_{Lip}\leq 1} \int_X \varphi(x)(f(x)-g(x))\,dx
\]
where $f,g$ are arbitrary real-valued functions on $X$. In our context, $X=\Omega\times\R^d$.


We now state our main result on the hydrodynamic limit.
\begin{theorem}\label{thm:main}
 Assume the initial data $\fe^0$ and $(\rho^0,\u^0)$ satisfy \eqref{eq:feinit}-\eqref{eq:fenormal}, \eqref{eq:rhouinit}, and \eqref{eq:initapp}.
 Let $\fe$ be a weak solution to \eqref{eq:VAeps}, and $(\rho,\u)$ be a strong solution to \eqref{eq:EASnon} up to time $T$. 
 Denote
 \[f(t,\x,\v)=\rho(t,\x)\delta_{\v=u(t,\x)}.\]
  Then, we have
 \begin{equation}\label{eq:convergence}
  \fe(t,\x,\v)\rightharpoonup \rho(t,\x)\delta_{\v=u(t,\x)}\quad\text{in }\mathcal{M}((0,T)\times\Omega\times\R^d),
 \end{equation}
 where $\mathcal{M}((0,T)\times\Omega\times\R^d)$ is the space of nonnegative Radon measures on $(0,T)\times\Omega\times\R^d$.
\end{theorem}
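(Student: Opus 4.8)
\section*{Proof proposal for Theorem \ref{thm:main}}

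The plan is to run a relative-entropy (modulated-energy) argument in the spirit of \cite{figalli2018rigorous}, with the essential new input being a quantitative treatment of the nonlinear discrepancy $\Ge$ from \eqref{eq:Ge}. The central object is the modulated kinetic energy
\[
\mathcal{H}(t)=\frac12\int_{\Omega\times\R^d}|\v-\u(t,\x)|^2\,\fe(t,\x,\v)\,d\x\,d\v .
\]
Using $\int_{\R^d}(\v-\ue)\fe\,d\v=0$ I would first decompose $\mathcal{H}(t)=\tfrac12\Dee(t)+\tfrac12\int_\Omega\rhoe|\ue-\u|^2\,d\x$, so that $\mathcal{H}$ simultaneously controls the \emph{thermal} fluctuation $\Dee$ (velocity concentration) and the \emph{bulk} defect between $\ue$ and $\u$. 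The goal is a Gr\"onwall inequality forcing $\mathcal{H}(t)\to0$ as $\eps\to0$, from which the weak convergence \eqref{eq:convergence} follows.

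First I would differentiate $\mathcal{H}$ in time, substituting $\pa_t\fe$ from \eqref{eq:VAeps} and $\pa_t\u$ from \eqref{eq:udyn}, and integrate by parts in $(\x,\v)$. The relaxation term is the decisive gain: it contributes exactly $-\tfrac1\eps\int|\v-\ue|^2\fe=-\tfrac1\eps\Dee$, a strong dissipation later used to absorb all thermal errors. Writing $\v=(\v-\u)+\u$, the free-transport term and the $\pa_t\u$ term produce a quadratic commutator bounded by $\|\grad_\x\u\|_{L^\infty}\mathcal{H}$ (Gr\"onwall-admissible by \eqref{eq:gradubound}), plus a linear-in-$(\v-\u)$ piece that cancels exactly against the convective part $\u\cdot\grad_\x\u$ of $\pa_t\u$.

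The heart of the argument is the alignment comparison between the kinetic force $\int(\v-\u)\cdot\F(\fe)\fe$ and the macroscopic force $\int\rhoe(\ue-\u)\cdot\A[\rho,\u]$. Symmetrizing the kinetic double integral in $(\x,\v)\leftrightarrow(\y,\w)$, using that $\phi$ is even and $\Phi$ is odd, I would rewrite it as $-\tfrac12\int_{\Omega^2\times\R^{2d}}\phi(\x-\y)(\a-\b)\cdot\Phi(\a)\,\fe\fe$, with $\a=\w-\v$ and $\b=\u(\y)-\u(\x)$. Splitting $\Phi(\a)=(\Phi(\a)-\Phi(\b))+\Phi(\b)$, the piece $-\tfrac12\int\phi\,(\a-\b)\cdot(\Phi(\a)-\Phi(\b))\le0$ by monotonicity of $\Phi$ and may simply be discarded as extra dissipation; the residual built from $\Phi(\b)$, compared against $I_2=\int\rhoe(\ue-\u)\cdot\A[\rho,\u]$, is precisely the source of the discrepancy $\Ge$. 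This is the main obstacle and the genuinely new difficulty relative to $p=2$. I expect to control it by invoking the maximum principle (Proposition \ref{prop:MP}), so that $\mathrm{supp}_\v\fe\subseteq\{|\v|\le\V^0\}$ and $\Phi(\z)=|\z|^{p-2}\z$ is evaluated only on the compact set $\{|\z|\le\V^0\}$, where it is Lipschitz with constant at most $(p-1)(\V^0)^{p-2}$, exactly as in the proof of Proposition \ref{prop:EAS}. Hence $|\a-\b|=|(\w-\ue(\y))-(\v-\ue(\x))+(\ue(\y)-\u(\y))-(\ue(\x)-\u(\x))|$ splits into thermal fluctuations, whose $L^2(\fe)$-mass is $\sqrt{\Dee}$, and bulk defects, whose $L^2(\rhoe)$-mass is $\sqrt{\mathcal{H}}$; by Cauchy--Schwarz and Young every thermal contribution is bounded by $\tfrac1{2\eps}\Dee$ (absorbed by the relaxation) plus $C\eps\,\mathcal{H}$, while the bulk contributions are bounded by $C\mathcal{H}$.

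Collecting terms yields $\frac{d}{dt}\mathcal{H}(t)\le C\,\mathcal{H}(t)+R_\eps(t)$, where $C$ depends only on $\|\grad_\x\u\|_{L^\infty}$, $\phi$, $p$, and $\V^0$, and $R_\eps(t)$ gathers the $\eps$-small remainders together with the density mismatch $\A[\rho,\u]$ versus $\A[\rhoe,\u]$; this last mismatch I would control through a $W_1$ stability estimate for the two continuity equations run simultaneously, since $\ue$ and $\u$ differ by $\sqrt{\mathcal{H}}$ in $L^2(\rhoe)$, giving a coupled Gr\"onwall for $\big(\mathcal{H},W_1(\rhoe,\rho)\big)$. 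A Gr\"onwall argument then gives $\mathcal{H}(t)\le e^{Ct}\big(\mathcal{H}(0)+o_\eps(1)\big)$. Finally, the well-prepared condition \eqref{eq:initapp} forces $\mathcal{H}(0)\to0$: since $\fe^0$ and $f^0$ are both supported in $\{|\v|\le\V^0\}$, the test function $\tfrac12|\v-\u^0(\x)|^2$ is Lipschitz on the common support, so $\mathcal{H}(0)\lesssim\V^0\,W_1(\fe^0,f^0)<\V^0\eps$. Hence $\mathcal{H}(t)\to0$, yielding velocity concentration and $\rhoe\ue\to\rho\u$; combined with $\rhoe\rightharpoonup\rho$ from the continuity-equation stability, testing $\fe$ against $\psi\in C_c((0,T)\times\Omega\times\R^d)$ and using $\int\psi\fe=\int\psi(\cdot,\ue)\rhoe+O(\sqrt{\Dee})$ delivers \eqref{eq:convergence}. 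I expect the discrepancy control in the third paragraph to be the crux, with the bounded-support Lipschitz estimate for $\Phi$ being exactly what rescues the argument for $p>2$.
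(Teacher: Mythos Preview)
Your approach --- basing the argument on the kinetic relative entropy $\mathcal{H}(t)=\tfrac12\int|\v-\u|^2\fe$ rather than the paper's macroscopic entropy $\etae(t)=\tfrac12\int\rhoe|\ue-\u|^2$ --- is a legitimate alternative, and the paper itself acknowledges it in the closing remark of Section~\ref{sec:Main}. In fact your route is \emph{cleaner} than the paper's for $p>2$, but for a reason you have missed.

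The key point is that in your setup the discrepancy $\Ge$ (and likewise $\Delta_\eps$) \emph{never arises}. After you symmetrize and discard $-\tfrac12\int\phi\,(\mathbf{a}-\mathbf{b})\cdot(\Phi(\mathbf{a})-\Phi(\mathbf{b}))\le0$ by monotonicity (with $\mathbf{a}=\w-\v$, $\mathbf{b}=\u(\y)-\u(\x)$), the remaining piece $-\tfrac12\int\phi\,(\mathbf{a}-\mathbf{b})\cdot\Phi(\mathbf{b})\,\fe\fe$ integrates \emph{exactly} in $(\v,\w)$, since $\Phi(\mathbf{b})$ does not depend on $(\v,\w)$: one obtains $\int_\Omega\rhoe(\ue-\u)\cdot\A[\rhoe,\u]\,d\x$. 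Subtracting your $I_2=\int_\Omega\rhoe(\ue-\u)\cdot\A[\rho,\u]\,d\x$ leaves precisely the density-mismatch term
\[
\int_\Omega\rhoe(\ue-\u)\cdot\big(\A[\rhoe,\u]-\A[\rho,\u]\big)\,d\x,
\]
which is the paper's $J_3$, bounded by $C\,W_1(\rhoe,\rho)\sqrt{\etae}$ and closed via the coupled Gr\"onwall with $W_1$ that you correctly describe later. This term is present identically for $p=2$; it is not a new nonlinear obstacle.

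Your identification of the residual as ``the source of the discrepancy $\Ge$'' is therefore a misdiagnosis, and the proposed cure --- splitting $|\mathbf{a}-\mathbf{b}|$ into thermal and bulk parts and Young-ing against $-\tfrac1\eps\Dee$ --- is unnecessary and, if used \emph{instead} of the exact cancellation above, actually fails: a crude bound $|(\mathbf{a}-\mathbf{b})\cdot\Phi(\mathbf{b})|\le(\V^0)^{p-1}|\mathbf{a}-\mathbf{b}|$ produces an unclosable $C\sqrt{\mathcal{H}}$ (not $C\mathcal{H}$) on the right-hand side. By contrast, the paper works with $\etae$ and must compare $\Phi(\w-\v)$ with $\Phi(\ue(\y)-\ue(\x))$; this \emph{does} create genuine discrepancies $\Delta_\eps$ and $\I_4$, which are handled via the H\"older estimate of Lemma~\ref{lem:pdiff}, yielding $C_p\Dee^{q/2}$ terms absorbed by $-\tfrac1\eps\Dee$ at the cost of the rate $\eps^{q/(2-q)}$. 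Your route, once the cancellation is recognized, avoids this entirely and gives the sharper rate $O(\eps)$ for $\mathcal{H}$ (and hence for $\Dee\le2\mathcal{H}$ pointwise in $t$, not merely in time average). The Lipschitz bound on $\Phi$ over $\{|\z|\le\V^0\}$ that you invoke is still needed, but only inside the $J_3$ estimate, to bound $[\phi(\x-\cdot)\Phi(\u(\cdot)-\u(\x))]_{\mathrm{Lip}}$.
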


More quantitative estimates to the limit \eqref{eq:convergence} will be presented in \eqref{eq:rhouconv} and \eqref{eq:rhouconvrate}.

\section{Rigorous derivation} \label{sec:Main}
In this section, we present the proof of our main theorem regarding the rigorous hydrodynamic limits, as outlined in Theorem \ref{thm:main}. When the velocity alignment is linear ($p=2$), a framework has been established in \cite{figalli2018rigorous}. Our approach extends this framework to accommodate situations where the velocity alignment is nonlinear ($p>2$). It is worth noting that we must establish additional controls to account for discrepancies generated by the nonlinearity, as detailed in Sections \ref{sec:energy} and \ref{sec:I4}.

\subsection{Relative entropy method}\label{sec:RE}
Our principal approach for rigorously establishing the hydrodynamic limit relies on the \emph{relative entropy method}. We closely adhere to the framework outlined in \cite{figalli2018rigorous} and focus our efforts on analyzing the following quantity:
\begin{equation}\label{eq:eta}
 \etae(t) = \frac12\int_\Omega\rhoe(t,x) |\ue(t,x)-\u(t,\x)|^2\,d\x.
\end{equation}

Let us remark the meaning of $\etae$.
Let $U=(\rho,\mathbf{m})=(\rho,\rho\u)$. A convex entropy on $U$ is defined as 
 \[\eta(U)=\eta(\rho,\mathbf{m}):=\frac{|\mathbf{m}|^2}{2\rho}=\frac{\rho|\u|^2}2.\]
 Then, we may define the relative entropy
 \begin{align*}
   \eta(U_\eps|U)=&\,\eta(U_\eps)-\eta(U)-D\eta(U)\cdot(U_\eps-U)\\
	=&\,\frac{\rhoe|\ue|^2}{2}-\frac{\rho|\u|^2}{2}-\left(-\frac{|\u|^2}{2}(\rhoe-\rho)+\u\cdot(\rhoe\ue-\rho\u)\right)=\frac12~\rhoe |\ue-\u|^2.	
 \end{align*}
Finally, the quantity $\etae$ defined in \eqref{eq:eta} is the spatial integration of the relative entropy $\eta(U_\eps|U)$.

We investigate the evolution of $\etae$ through the following calculation:
\[
\frac{d}{dt}\etae=\frac{d}{dt}\int_{\Omega} \left(\frac{\rhoe |\ue|^2}{2}-\rhoe\ue \cdot \u+\frac{\rhoe|\u|^2}{2}\right)\,d\x=\frac{d}{dt}E_\eps+\I+\II,
\]
where $E_\eps$ is the \emph{macroscopic energy}
\[
E_\eps=\frac12\int_{\Omega} \rhoe |\ue|^2\,d\x.
\]

In particular, for $\I$ we have
\begin{align*}
\I=&\int_\Omega \big(-\pa_t(\rhoe\ue)\cdot\u-\rhoe\ue\cdot\pa_t\u\big)\,d\x\\
=&\int_\Omega \left(\grad_\x\cdot(\rhoe\ue\otimes\ue+\Reps)-\rhoe\A[\rhoe,\ue]-\Ge\right)\cdot\u\,d\x\\
&+\int_\Omega\rhoe\ue\cdot\big( \u\cdot\grad_\x\u-\A[\rho,\u]\big)\,d\x\\
=&\int_\Omega \rhoe\ue\otimes(\ue-\u):\grad_\x\u\,d\x-\int_\Omega \rhoe\big(\u\cdot\A[\rhoe,\ue]+\ue\cdot\A[\rho,\u]\big)\,d\x\\
&-\int_\Omega \grad_\x\u:\Reps\,d\x-\int_\Omega \Ge\cdot\u\,d\x\\
=&\,\I_1+\I_2+\I_3+\I_4.
\end{align*}
Similarly, for $\II$ we have
\begin{align*}
\II=&\,\frac12\int_\Omega \pa_t\rhoe\,|\u|^2\,d\x+\int_\Omega \rhoe\u\cdot\pa_t\u\,d\x\\
=&-\frac12\int_\Omega \grad_\x\cdot(\rhoe\ue)~|\u|^2\,d\x+\int_\Omega\rhoe\u\cdot\big( -\u\cdot\grad_\x\u+\A[\rho,\u]\big)\,d\x\\
=&-\int_\Omega \rhoe\u\otimes(\ue-\u):\grad_\x\u\,d\x+\int_\Omega\rhoe\u\cdot\A[\rho,\u]\,d\x\\
=&\,\II_1+\II_2.
\end{align*}

Next, we estimate all terms in $\I$ and $\II$. We start with two straightforward bounds
\[
|\I_1+\II_1|=\left|\int_\Omega\rhoe(\ue-\u)\otimes(\ue-\u):\grad_\x\u\,d\x\right|\leq\|\grad_\x\u\|_{L^\infty}\,\etae,
\]
and
\[
|\I_3|=\left|\int_\Omega\grad_x\u:\Reps\,d\x\right|\leq\|\grad_x\u\|_{L^\infty}\int_{\Omega\times\R^d} \fe(\x,\v)|\v -\ue|^2\,d\x d\v=\|\grad_\x\u\|_{L^\infty}\,\Dee,
\]
where we recall the definition of $\Dee$ in \eqref{eq:D2}.

Then, we focus on the term
\begin{equation}\label{eq:alignestJ}
J:=\I_2+\II_2=\int_\Omega\rhoe\big(-\u\cdot\A[\rhoe,\ue]+(\u-\ue)\cdot\A[\rho,\u]\big)\,d\x.	
\end{equation}
Start with the first term in \eqref{eq:alignestJ} and get
\begin{align*}
&\int_\Omega-\rhoe\u\cdot\A[\rhoe,\ue]\,d\x=
\int_{\Omega^2}-\rhoe(\x)\rhoe(\y)\phi(\x-\y)\u(\x)\cdot\Phi(\ue(\y)-\ue(\x))\,d\x d\y\\
&=\frac12\int_{\Omega^2}\rhoe(\x)\rhoe(\y)\phi(\x-\y)(\u(\y)-\u(\x))\cdot\Phi(\ue(\y)-\ue(\x))\,d\x d\y\\
&=\frac12\int_{\Omega^2}\rhoe(\x)\rhoe(\y)\phi(\x-\y)(\ue(\y)-\ue(\x))\cdot\Phi(\ue(\y)-\ue(\x))\,d\x d\y\\
&+\frac12\int_{\Omega^2}\rhoe(\x)\rhoe(\y)\phi(\x-\y)(\u(\y)-\u(\x)-\ue(\y)+\ue(\x))\cdot\Phi(\ue(\y)-\ue(\x))\,d\x d\y\\
&=:D_\eps+J_1.
\end{align*}
Here, we symmetrize $\x$ and $\y$ in the second equality, followed by splitting the quantity into two parts. In particular, $D_\eps$ is the \emph{macroscopic enstrophy}
\[D_\eps = \frac12\int_{\Omega^2}\rhoe(\x)\rhoe(\y)\phi(\x-\y)|\ue(\y)-\ue(\x)|^p\,d\x d\y.\]
We will control $D_\eps$ later by the kinetic enstrophy $\De$.

Now we work on the second term in \eqref{eq:alignestJ}. Split the term into two parts:
\begin{align*}
&\int_\Omega\rhoe(\u-\ue)\cdot\A[\rho,\u]\,d\x\\
&=\int_{\Omega^2}\rhoe(\x)\rho(\y)\phi(\x-\y)(\u(\x)-\ue(\x))\cdot\Phi(\u(\y)-\u(\x))\,d\x d\y\\
&=\int_{\Omega^2}\rhoe(\x)\rhoe(\y)\phi(\x-\y)(\u(\x)-\ue(\x))\cdot\Phi(\u(\y)-\u(\x))\,d\x d\y\\
&+\int_{\Omega^2}\rhoe(\x)(\rho(\y)-\rhoe(\y))\phi(\x-\y)(\u(\x)-\ue(\x))\cdot\Phi(\u(\y)-\u(\x))\,d\x d\y\\
&=:J_2+J_3.
\end{align*}
We further symmetrize $\x$ and $\y$ in $J_2$ and obtain
\[
J_2=\frac12\int_{\Omega^2}\rhoe(\x)\rhoe(\y)\phi(\x-\y)(\u(\x)-\u(\y)-\ue(\x)+\ue(\y))\cdot\Phi(\u(\y)-\u(\x))\,d\x d\y.
\]
Combing $J_1$ and $J_2$, we get
\begin{align*}
J_1+J_2=&\,	\frac12\int_{\Omega^2}\rhoe(\x)\rhoe(\y)\phi(\x-\y)\big(\u(\y)-\u(\x)-\ue(\y)+\ue(\x)\big)\\&\qquad\cdot\big(\Phi(\ue(\y)-\ue(\x))-\Phi(\u(\y)-\u(\x))\big)\,d\x d\y.
\end{align*}
Observe that since $\Phi$ is monotone increasing, we have
\[(\z_1-\z_2)\cdot(\Phi(\z_1)-\Phi(\z_2))\geq0,\quad\forall~\z_1,\z_2.\]
This yields 
\[J_1+J_2\leq0.\]

For the remaining term $J_3$:
\[J_3=\int_\Omega\rhoe(\x)(\u(\x)-\ue(\x))\cdot\left[\int_\Omega(\rho(\y)-\rhoe(\y))\phi(\x-\y)\Phi(\u(\y)-\u(\x))\,d\y\right]d\x,\]
we obtain the point-wise bound on the inner integral
\begin{align*}
&\int_\Omega (\rho(\y)-\rhoe(\y))\phi(\x-\y)\Phi(\u(\y)-\u(\x))\,d\y	\\
\leq&\,W_1(\rho,\rhoe)\cdot\Big[\phi(\x-\cdot)\,\Phi(\u(\cdot)-\u(\x))\Big]_{Lip}\\
\leq&\,W_1(\rho,\rhoe)\Big([\phi]_{Lip}(\V^0)^{p-1}+\|\phi\|_{L^\infty}(p-1)(\V^0)^{p-2}\|\grad_\x\u\|_{L^\infty}\Big)\\
\leq&\, C(1+\|\grad_\x\u\|_{L^\infty})\,W_1(\rho,\rhoe),
\end{align*}
for any $\x\in\Omega$. Here, we have used the maximum principle \eqref{eq:MP}.

We then apply H\"older inequality and obtain 
\[|J_3|\leq C(1+\|\grad_\x\u\|_{L^\infty})\,W_1(\rho,\rhoe)\cdot\sqrt{\etae}\leq C(1+\|\grad_\x\u\|_{L^\infty})\big(\etae+W_1^2(\rho,\rhoe)\big).\]

Collecting all the estimates, we conclude with
\begin{equation}\label{eq:etaest}
\frac{d}{dt}\etae\leq\frac{d}{dt}E_\eps+D_\eps+\|\grad_x\u\|_{L^\infty}\Dee+C(1+\|\grad_\x\u\|_{L^\infty})\big(\etae+ W_1^2(\rho,\rhoe)\big) +\I_4.
\end{equation}

\subsection{The control of macroscopic energy and enstrophy}\label{sec:energy}
In this part, we aim to obtain a bound on 
\[\frac{d}{dt}E_\eps+D_\eps\]
that appears on the right hand side of \eqref{eq:etaest}. We compare $E_\eps$ and $D_\eps$ with the kinetic energy $\Ee$ and enstrophy $\De$, and apply \eqref{eq:energydecay} to obtain
\begin{equation}\label{eq:EDest}
\frac{d}{dt}E_\eps+D_\eps\leq \frac{d}{dt}(E_\eps-\Ee)+(D_\eps-\De)-\frac1\eps\Dee.	
\end{equation}

Now, let us control the differences between kinetic and macroscopic energies and enstrophies.

\begin{lemma}
The following inequalities hold:
\begin{align}
E_\eps(t)&\leq\Ee(t),\label{eq:EE}\\
D_\eps(t)&\leq\De(t)+|\Delta_\eps(t)|,\label{eq:DD}
\end{align}
where the discrepancy $\Delta_\eps(t)$ is defined as
\begin{align}\label{eq:Deltaeps}
 \Delta_\eps(t):=\frac{1}{2}\int_{\Omega^2\times\R^{2d}} \phi(\x-\y)\,(|\w-\v|^{p-2}-|\ue(\y)-\ue(\x)|^{p-2})\,|\w-\v|^2\\
\fe(t,\x,\v)\fe(t,\y,\w)\,d\x d\y d\v d\w.	\nonumber
\end{align}
\end{lemma}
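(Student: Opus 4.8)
Both bounds are of a different character, so I would establish them separately. For \eqref{eq:EE}, the plan is to expand the microscopic kinetic energy density around the macroscopic velocity $\ue$. Writing $|\v|^2=|\v-\ue|^2+2\ue\cdot(\v-\ue)+|\ue|^2$ and integrating against $\fe$ in $\v$, the cross term drops out because $\int_{\R^d}(\v-\ue)\fe\,d\v=0$ by the definition \eqref{eq:ue} of $\ue$. This produces the pointwise identity $\int_{\R^d}|\v|^2\fe\,d\v=\rhoe|\ue|^2+\int_{\R^d}|\v-\ue|^2\fe\,d\v$, and integrating in $\x$ gives $\Ee=E_\eps+\tfrac12\Dee$. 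Since $\Dee\geq0$, the bound $E_\eps\leq\Ee$ follows at once, with the gap being exactly $\tfrac12\Dee$, which is the quantity the local relaxation dissipates.

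For \eqref{eq:DD}, the idea is to insert the macroscopic factor $|\ue(\y)-\ue(\x)|^{p-2}$ into the kinetic enstrophy and to recognize $\Delta_\eps$ as the remainder. Splitting $|\w-\v|^p=|\w-\v|^{p-2}|\w-\v|^2$, I would write
\[
\De=\widetilde{D}_\eps+\Delta_\eps,\qquad \widetilde{D}_\eps:=\frac12\int_{\Omega^2\times\R^{2d}}\phi(\x-\y)\,|\ue(\y)-\ue(\x)|^{p-2}\,|\w-\v|^2\,\fe(\x,\v)\fe(\y,\w)\,d\x d\y d\v d\w,
\]
so that the remainder is precisely the discrepancy $\Delta_\eps$ in \eqref{eq:Deltaeps}. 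It then suffices to prove $D_\eps\leq\widetilde{D}_\eps$, since this immediately yields $D_\eps\leq\De-\Delta_\eps\leq\De+|\Delta_\eps|$.

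The comparison $D_\eps\leq\widetilde{D}_\eps$ reduces, after freezing $(\x,\y)$ and pulling the nonnegative factor $\phi(\x-\y)|\ue(\y)-\ue(\x)|^{p-2}$ out of the velocity integrals, to the pointwise variance inequality
\[
\int_{\R^{2d}}|\w-\v|^2\,\fe(\x,\v)\fe(\y,\w)\,d\v d\w\;\geq\;|\ue(\y)-\ue(\x)|^2\,\rhoe(\x)\rhoe(\y).
\]
I would verify this by expanding $|\w-\v|^2$ and using $\int_{\R^d}\v\fe\,d\v=\rhoe\ue$ together with the second-moment identity from the first part: the left-hand side equals $|\ue(\y)-\ue(\x)|^2\rhoe(\x)\rhoe(\y)+\rhoe(\x)\!\int_{\R^d}|\w-\ue(\y)|^2\fe(\y,\w)\,d\w+\rhoe(\y)\!\int_{\R^d}|\v-\ue(\x)|^2\fe(\x,\v)\,d\v$, and the last two Reynolds-stress terms are manifestly nonnegative. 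Multiplying by the nonnegative weight and integrating over $\Omega^2$ then delivers $D_\eps\leq\widetilde{D}_\eps$. One could alternatively obtain the stronger $D_\eps\leq\De$ in one stroke from Jensen's inequality applied to the convex map $\z\mapsto|\z|^p$, since $\ue(\y)-\ue(\x)$ is the average of $\w-\v$ against the normalized densities; but the decomposition above is the one we want, as it exposes the precise object $\Delta_\eps$ that must be tracked through the remainder of the argument.

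The lemma itself is elementary once the decomposition is chosen, and the genuine obstacle lies downstream. The factor $|\w-\v|^{p-2}-|\ue(\y)-\ue(\x)|^{p-2}$ in $\Delta_\eps$ is not sign-definite, recording whether the microscopic velocity spread exceeds the macroscopic one, which is exactly why only $|\Delta_\eps|$ can appear on the right-hand side of \eqref{eq:DD}. The real work, deferred to the estimates of Sections \ref{sec:energy} and \ref{sec:I4}, is to show that $|\Delta_\eps|$, alongside the structurally similar term $\I_4$ coming from $\Ge$, is absorbed by the dissipation $\tfrac1\eps\Dee$ and the relative entropy $\etae$, using the H\"older/Lipschitz continuity of $\z\mapsto|\z|^{p-2}\z$ and the mono-kinetic structure enforced by the relaxation.
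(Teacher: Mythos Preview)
Your proof is correct and follows essentially the same route as the paper: for \eqref{eq:EE} the paper invokes Cauchy--Schwarz directly while you use the equivalent variance decomposition (which even exhibits the exact gap $\tfrac12\Dee$), and for \eqref{eq:DD} your decomposition $\De=\widetilde{D}_\eps+\Delta_\eps$ together with the pointwise expansion of $\int|\w-\v|^2\fe\fe\,d\v d\w$ is precisely what the paper does.
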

\begin{proof}
The first inequality \eqref{eq:EE} follows directly from the Cauchy-Schwarz inequality
\begin{equation}\label{eq:CSineq}
\rhoe|\ue|^2=\frac{\left(\int_{\R^d}\v\fe\,d\v\right)^2}{\int_{\R^d}\fe\,d\v}\leq\int_{\R^d}|\v|^2\fe\,d\v.	
\end{equation}
For the second inequality \eqref{eq:DD}, we decompose $\De$ into two parts:
\[
\De=\frac{1}{2}\int_{\Omega^2\times\R^{2d}} \phi(\x-\y)|\ue(\y)-\ue(\x)|^{p-2}|\w-\v|^2\fe(\x,\v)\fe(\y,\w)\,d\x d\y d\v d\w+\Delta_\eps.
\]
For the first part, we apply \eqref{eq:CSineq} and obtain
\begin{align*}
 &\int_{\R^{2d}}|\w-\v|^2\fe(\x,\v)\fe(\y,\w)\,d\v d\w=\int_{\R^{2d}}(|\w|^2-2\w\cdot\v+|\v|^2)\fe(\x,\v)\fe(\y,\w)\,d\v d\w\\
 &=\rhoe(\x)\int_{\R^d}|\w|^2\fe(\y,\w)\, d\w-2\rhoe(\x)\ue(\x)\cdot\rhoe(\y)\ue(\y)+\rhoe(\y)\int_{\R^d}|\v|^2\fe(\x,\v)\, d\v\\
 &\geq \rhoe(\x)\rhoe(\y)\left(|\ue(\y)|^2-2\ue(\x)\cdot\ue(\y)+|\ue(\x)|^2\right)=\rhoe(\x)\rhoe(\y)|\ue(\x)-\ue(\y)|^2.
\end{align*}
This leads to the bound
\[\frac{1}{2}\int_{\Omega^2\times\R^{2d}} \phi(\x-\y)|\ue(\y)-\ue(\x)|^{p-2}|\w-\v|^2\fe(\x,\v)\fe(\y,\w)\,d\x d\y d\v d\w\geq D_\eps.\]
The inequality \eqref{eq:DD} follows as a direct consequence.
\end{proof}

When $p=2$, the discrepancy $\Delta_\eps(t)=0$. However, with the nonlinear alignment $p>2$, $\Delta_\eps$ does not vanish unless $\fe$ is mono-kinetic. Therefore, we will use the kinetic enstrophy $\Dee$ to control the discrepancy.

\begin{lemma}\label{lem:pdiff}
 Let $a,b\in[0,R]$. The following inequalities hold:
 \begin{align*}
   &\left|a^{p-2}-b^{p-2}\right|\leq |a-b|^{p-2},\quad \text{for } 2<p\leq3 ,\quad \\
   &\left|a^{p-2}-b^{p-2}\right|\leq (p-2)R^{p-3}|a-b|,\quad \text{for } p>3.
\end{align*}
\end{lemma}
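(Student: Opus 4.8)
The plan is to split the argument at $p=3$, since the exponent $p-2$ crosses the value $1$ there, which is precisely where the power function $t\mapsto t^{p-2}$ switches from being concave (hence subadditive) to convex (hence locally Lipschitz on bounded intervals). Throughout I would assume without loss of generality that $a\geq b$, so that $a^{p-2}-b^{p-2}\geq0$ and $|a-b|=a-b$, and then handle the two regimes independently.

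For $2<p\leq3$, write $q:=p-2\in(0,1]$. The key fact I would establish is the subadditivity inequality
\[
(s+t)^q\leq s^q+t^q,\qquad s,t\geq0.
\]
This follows because for $q\in(0,1]$ one has $\sigma^q\geq\sigma$ for every $\sigma\in[0,1]$ (as $\sigma^q=\sigma\cdot\sigma^{q-1}$ with $\sigma^{q-1}\geq1$); applying this to $\sigma=s/(s+t)$ and to $1-\sigma=t/(s+t)$ and summing gives $s^q+t^q\geq(s+t)^q$, the case $s+t=0$ being trivial. Taking $s=b$ and $t=a-b$ then yields $a^q\leq b^q+(a-b)^q$, i.e. $a^{p-2}-b^{p-2}\leq(a-b)^{p-2}=|a-b|^{p-2}$, which is the first claimed bound. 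Note that $R$ plays no role here, consistent with the fact that for a sublinear power the difference is controlled without reference to the endpoint.

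For $p>3$ the exponent $p-2$ exceeds $1$, so $t\mapsto t^{p-2}$ is $C^1$ on $[0,R]$ with derivative $(p-2)t^{p-3}$. The mean value theorem gives $a^{p-2}-b^{p-2}=(p-2)\,\xi^{p-3}(a-b)$ for some $\xi\in[b,a]\subseteq[0,R]$, and since $p-3>0$ the factor $\xi^{p-3}$ is maximized at $\xi=R$, producing $|a^{p-2}-b^{p-2}|\leq(p-2)R^{p-3}|a-b|$, the second claimed bound. I do not anticipate a genuine obstacle in this lemma; the only point requiring a little care is the clean proof of subadditivity in the first regime (equivalently, concavity of $t^q$ together with $0^q=0$), and the observation that the two estimates match at the threshold $p=3$, where both reduce to the identity $|a-b|\leq|a-b|$.
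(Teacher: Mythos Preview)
Your proof is correct and follows essentially the same approach as the paper: both arguments reduce the case $2<p\leq3$ to subadditivity of $t\mapsto t^{p-2}$ on $[0,\infty)$ (the paper verifies this by rescaling and checking $g(z)=z^{p-2}-1-(z-1)^{p-2}\leq0$ via $g(1)=0$ and $g'\leq0$, while you use the equivalent elementary bound $\sigma^q\geq\sigma$ on $[0,1]$), and both invoke the mean value theorem for $p>3$. The only cosmetic difference is the mechanism used to certify subadditivity in the concave regime.
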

\begin{proof}
For the first inequality, we assume $b\leq a$ without loss of generality. If $b=0$, the equality holds trivially. If $b>0$, define $z=a/b\in[1,\infty)$. The inequality is equivalent to
\[g(z):=z^{p-2}-1-(z-1)^{p-2}\leq0.\]
One can easily verify $g(1)=0$ and $g'(z)\leq0$ for $z\geq1$. This leads to the desired inequality.

The second inequality is a direct application of the mean value theorem.	
\end{proof}

Now we apply Lemma \ref{lem:pdiff} with $a=|\w-\v|$ and $b=|\ue(\y)-\ue(\x)|$. Let
\[q = \min\{p-2,1\}\leq1,\quad \text{and}\quad c_p=\begin{cases}
0&p=2,\\1&2<p\leq3,\\(p-2)(\V^0)^{p-3}&p>3.
\end{cases}
\]
We have
\begin{align}\label{eq:deltaest}
   &\Big||\w-\v|^{p-2}-|\ue(\y)-\ue(\x)|^{p-2}\Big| \leq c_p\Big||\w-\v|-|\ue(\y)-\ue(\x)|\Big|^{q}\\
    &\leq c_p\Big(|\v-\ue(\x)|+|\w-\ue(\y)|\Big)^{q}\leq c_p\Big(|\v-\ue(\x)|^{q}+|\w-\ue(\y)|^q\Big).\nonumber
\end{align}
Note that we have used triangle inequality in the second inequality, and concavity of the function $x^q$ in the last inequality.
\begin{remark}\label{rmk:Phi}
 For general nonlinearity \eqref{eq:Phigeneral}, Lemma \ref{lem:pdiff} can be replaced by
 \[|h(a)-h(b)|\leq C_R|a-b|^q,\]
 where $C_R$ is the H\"older (or Lipschitz) coefficient on $h$ on $[0,R]$. Then \eqref{eq:deltaest} follows with $c_p = C_{\V^0}$.
\end{remark}

Utilizing the estimate \eqref{eq:deltaest} and H\"older inequality, we can bound $\Delta_\eps$ as follows:
\begin{align}
     |\Delta_\eps|&\leq c_p\int_{\Omega^2\times\R^{2d}}\phi(\x-\y)|\v -\ue(\x)|^{q}|\w-\v|^2\fe(\x,\v)\fe(\y,\w)\,d\x d\y d\v d\w \nonumber\\
     &\leq c_p\left(\int_{\Omega\times\R^d}|\v-\ue(\x)|^2\fe(\x,\v)\,d\x d\v\right)^{\frac{q}{2}}\cdot\nonumber\\
     &\qquad\left(\int_{\Omega\times\R^d}\left(\int_{\Omega\times\R^d}\phi(\x-\y)|\v -\w|^2\fe(\y,\w)\,d\y d\w\right)^{\frac{2}{2-q}}\fe(\x,\v)\,d\x d\v \right)^{\frac{2-q}{2}}\nonumber\\
     &\leq c_p\cdot\Dee^{\frac{q}{2}}\cdot\|\phi\|_{L^\infty}\cdot(\V^0)^2\leq \frac{C_p}{2}\,\Dee^{\frac{q}{2}},\label{eq:deltabound}
\end{align}
where we defined the constant $C_p=8c_p\|\phi\|_{L^\infty}\|\u^0\|_{L^\infty}^2$.

\subsection{The control of the discrepancy $\I_4$}\label{sec:I4}
When $p=2$, the term $\Ge$ vanishes. Hence we have $\I_4=0$. With the nonlinear alignment $p>2$, $\Ge$ does not vanish unless $\fe$ is mono-kinetic. Therefore, we may treat $\I_4$ similarly as the discrepancy $\Delta_\eps$.
\begin{align}
|\I_4|&\leq	\|\u\|_{L^\infty}\int_{\Omega^2\times\R^{2d}}\phi(\x-\y)\,\Big||\w-\v|^{p-2}-|\ue(\y)-\ue(\x)|^{p-2}\Big|\,|\w-\v|\nonumber\\
&\hspace{2.5in}\fe(\x,\v)\fe(\y,\w)\,d\x d\y d\v d\w\nonumber\\
&\leq 2c_p\|\u\|_{L^\infty}\int_{\Omega^2\times\R^{2d}}\phi(\x-\y)\,|\v-\ue(\x)|^{q}\,|\w-\v|\fe(\x,\v)\fe(\y,\w)\,d\x d\y d\v d\w\nonumber\\
&\leq 2c_p\|\phi\|_{L^\infty}\V^0\|\u^0\|_{L^\infty}\cdot \Dee^{\frac{q}{2}}\leq\frac{C_p}{2}\Dee^{\frac{q}{2}}.\label{eq:I4}
\end{align}

Collecting the estimates \eqref{eq:EDest}, \eqref{eq:DD}, \eqref{eq:deltabound} and \eqref{eq:I4}, the bound \eqref{eq:etaest} becomes
\begin{align*}
\frac{d}{dt}\etae\leq&\,\left(-\frac{1}{\eps}\Dee+\|\grad_\x\u\|_{L^\infty}\Dee+C_p\Dee^{\frac{q}{2}}\right)+\frac{d}{dt}(E_\eps-\Ee)\\
&\quad+C(1+\|\grad_\x\u\|_{L^\infty})\big(\etae+ W_1^2(\rho,\rhoe)\big).	
\end{align*}
Integrate in $[0,t]$ and apply \eqref{eq:EE}. We end up with
\begin{align}
\etae(t)\leq&\,\big(\etae(0)+\Ee(0)-E_\eps(0)\big)\nonumber\\
&+\int_0^t\left(-\frac{1}{\eps}\Dee(s)+\|\grad_\x\u(s,\cdot)\|_{L^\infty}\Dee(s)+C_p\Dee^{\frac{q}{2}}(s)\right)\,ds\nonumber\\
&+C\int_0^t(1+\|\grad_\x\u(s,\cdot)\|_{L^\infty})\Big(\etae(s)+ W_1^2(\rho(s,\cdot),\rhoe(s,\cdot))\Big)\,ds.\label{eq:etaestint}
\end{align}

\subsection{The control of the relative entropy}\label{sec:etae}
Now, we proceed to demonstrate the convergence of the relative entropy $\etae(t)$ to zero as $\eps\to0$. To achieve this, we will estimate all three terms on the right-hand side of \eqref{eq:etaestint}.

For the first term in \eqref{eq:etaestint}, which concerns the initial data, we contend that it can be effectively controlled by the Wasserstein-1 distance $W_1(\fe^0, f^0)$. As a result, we can establish its smallness, in accordance with the assertion in \eqref{eq:initapp}. More precisely, we have the following bounds:
\begin{align}
\etae(0)+\Ee(0)-E_\eps(0)&=\frac12\int_\Omega\left(\rhoe^0|\ue^0-\u^0|^2-\rhoe^0|\ue^0|^2\right)\,d\x+\frac12\int_{\Omega\times\R^d}|\v|^2\fe^0\,d\x d\v\nonumber\\
&=\frac12\int_{\Omega\times\R^d}|\v-\u^0(\x)|^2\fe^0(\x,\v)\,d\x d\v\nonumber\\
&=\frac12\int_{\Omega\times\R^d}|\v-\u^0(\x)|^2(\fe^0(\x,\v)-f^0(\x,\v))\,d\x d\v\nonumber\\
&\leq \frac12\left[|\v-\u^0(\x)|^2\right]_{Lip_{(\x,\v)}}W_1(\fe^0,f^0)\leq C\eps.\label{eq:eta1}
\end{align}
Note that $\left[|\v-\u^0(\x)|^2\right]_{Lip_{(\x,\v)}}$ is bounded as $\fe^0$ is compactly supported, and $\u^0$ is Lipschitz. The constant $C$ may be taken as $C=(1+[\u^0]_{Lip})\V^0$.

Next, we discuss the second term in \eqref{eq:etaestint}. When $p=2$, we have $C_2=0$ and there is no discrepancy. We take $\eps$ small enough with $\eps<\frac1M$. Recall the a priori bound \eqref{eq:gradubound}. It implies that the second term in \eqref{eq:etaestint} is negative.  

 When $p>2$, we need to obtain an additional control to the discrepancy $C_p\Dee^{\frac{q}{2}}$.
Consider the function
\[F(x) = -ax+bx^\gamma\]
with $a,b>0$ and $\gamma\in(0,1)$. One can easily obtain the maximum
\[\max_{x\geq0}F(x)=F(x_*)=\left(\gamma^{\frac{1}{1-\gamma}}+\gamma^{\frac{\gamma}{1-\gamma}}\right)a^{-\frac{\gamma}{1-\gamma}}b^{\frac{1}{1-\gamma}},\quad\text{where}\,\,x_*=\gamma^{\frac{1}{1-\gamma}}a^{-\frac{1}{1-\gamma}}b^{\frac{1}{1-\gamma}}.\]
Take $\eps$ small enough such that $\eps<\frac{1}{2M}$. Apply the above estimate with 
\[a=\frac{1}{\eps}-\|\grad_\x\u(t,\cdot)\|_{L^\infty}>\frac{1}{2\eps},\quad b=C_p\quad \text{and} \quad \gamma=\frac{q}{2}.\]
We have
\begin{align}
-\frac{1}{\eps}\Dee(t)&+\|\grad_\x\u(t,\cdot)\|_{L^\infty}\Dee(t)+C_p\Dee^{\frac{q}{2}}(t)\nonumber\\ &\leq
\left(\left(\tfrac{q}{2}\right)^{\frac{2}{2-q}}+\left(\tfrac{q}{2}\right)^{\frac{q}{2-q}}\right)(2\eps)^{\frac{q}{2-q}}C_p^{\frac{2}{2-q}}\leq C\eps^{\frac{q}{2-q}},\label{eq:eta2}
\end{align}
where the constant $C$ depends only on $p$.

Applying \eqref{eq:eta1} and \eqref{eq:eta2} to \eqref{eq:etaestint}, we deduce
\begin{align}
\etae(t)\leq&\,C\eps+C\eps^{\frac{q}{2-q}}t+C\int_0^t\Big(\etae(s)+ W_1^2(\rho(s,\cdot),\rhoe(s,\cdot))\Big)\,ds.\label{eq:etaestint2}
\end{align}
The constant $C$ depends on initial data, the parameter $p$, and the a priori bound $M$ (see \eqref{eq:gradubound}).

To close the argument, we state the following control on $W_1(\rho,\rhoe)$ by the relative entropy $\etae$. 
\begin{lemma}\label{lem:W1}
There exist a constant $C=C(T,M)>0$, such that
\begin{equation}\label{eq:W1bound}
W_1^2(\rho(t,\cdot),\rhoe(t,\cdot))\leq C\left(W_1^2(\rho^0,\rhoe^0)+\int_0^t\etae(s)\,ds\right),	
\end{equation}
for any $t\in[0,T]$.
\end{lemma}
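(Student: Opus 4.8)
The plan is to reduce \eqref{eq:W1bound} to a first‑order differential inequality for $w(t):=W_1(\rho(t,\cdot),\rhoe(t,\cdot))$ and then close it by Gr\"onwall together with two applications of the Cauchy--Schwarz inequality. The two ingredients available are that both densities satisfy continuity equations, $\pa_t\rho+\div(\rho\u)=0$ from \eqref{eq:EASnon} and $\pa_t\rhoe+\div(\rhoe\ue)=0$ from \eqref{eq:EASeps}, and that $\u$ is Lipschitz with $[\u]_{Lip}\le\|\grad_\x\u\|_{L^\infty}\le M$ by \eqref{eq:gradubound}. Crucially, $\ue$ carries no such regularity, so I will neither flow $\rhoe$ along $\ue$ nor differentiate the Kantorovich supremum directly; instead I transport the test function along the \emph{Lipschitz} flow of $\u$, which is all the regularity the argument needs. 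Note that the $p$‑alignment nonlinearity plays no role at this stage: it enters only through the Lipschitz bound on $\u$, already secured by Proposition \ref{prop:EAS}.

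Concretely, I fix a terminal time $t$ and a $1$‑Lipschitz Kantorovich potential $\psi$ realizing the duality $w(t)=\int_\Omega\psi\,(d\rho_t-d\rhoe_t)$. Let $\X_{s\to t}$ denote the forward flow of $\u$ and set $\psi_s:=\psi\circ\X_{s\to t}$, equivalently the solution of the transport equation $\pa_s\psi_s+\u\cdot\grad_\x\psi_s=0$ with terminal datum $\psi_t=\psi$. Consider the scalar function $g(s):=\int_\Omega\psi_s\,(d\rho_s-d\rhoe_s)$. Differentiating in $s$ and substituting both continuity equations together with the transport equation for $\psi_s$, the two integrals against $d\rho_s$ cancel (indeed $s\mapsto\int\psi_s\,d\rho_s$ is constant, since $\rho$ is transported by the same flow), leaving
\[
g'(s)=\int_\Omega\grad_\x\psi_s\cdot(\u-\ue)\,d\rhoe_s .
\]

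To bound $g'$ I use that the flow of $\u$ has Lipschitz constant at most $e^{M(t-s)}$, whence $[\psi_s]_{Lip}\le e^{M(t-s)}$ and so $|\grad_\x\psi_s|\le e^{M(t-s)}$ a.e. Combined with the Cauchy--Schwarz estimate $\int_\Omega|\u-\ue|\,d\rhoe_s=\int_\Omega\rhoe_s|\u-\ue|\,d\x\le\big(\int_\Omega\rhoe_s\,d\x\big)^{1/2}\big(\int_\Omega\rhoe_s|\u-\ue|^2\,d\x\big)^{1/2}=\sqrt{2\etae(s)}$, where I used the unit mass \eqref{eq:fenormal} and the definition \eqref{eq:eta}, this gives $|g'(s)|\le e^{M(t-s)}\sqrt{2\etae(s)}$. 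Since $g(t)=w(t)$ while $g(0)=\int_\Omega\psi_0\,(d\rho^0-d\rhoe^0)\le[\psi_0]_{Lip}\,W_1(\rho^0,\rhoe^0)\le e^{Mt}W_1(\rho^0,\rhoe^0)$, integrating from $0$ to $t$ yields
\[
w(t)\le e^{Mt}W_1(\rho^0,\rhoe^0)+\int_0^t e^{M(t-s)}\sqrt{2\etae(s)}\,ds .
\]
Squaring via $(a+b)^2\le2a^2+2b^2$ and applying Cauchy--Schwarz in time, $\big(\int_0^t\sqrt{\etae}\,ds\big)^2\le T\int_0^t\etae\,ds$, produces \eqref{eq:W1bound} with $C=C(T,M)$.

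The only genuinely delicate point, and the one I expect to require care, is rigor at the level of measures and weak solutions: $\rhoe$ is merely a time‑continuous family of Radon measures and $\ue$ is not Lipschitz, so every manipulation above must be interpreted weakly against the spatially Lipschitz test function $\psi_s$. The flow‑transport device is exactly what keeps all test functions Lipschitz throughout and thereby sidesteps both the missing regularity of $\ue$ and the lack of $C^{1,1}$ regularity of the Kantorovich potential (which would obstruct a naive $\frac{d}{dt}W_1$ computation). A minor accompanying technicality is the a.e.\ differentiability of $s\mapsto g(s)$ and the terminal identity $g(t)=w(t)$, which I would justify from the absolute continuity of $s\mapsto\rho_s,\rhoe_s$ in $W_1$ and the Kantorovich--Rubinstein duality recalled before Theorem \ref{thm:main}.
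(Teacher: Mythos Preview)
Your argument is correct and reaches exactly the same bound as the paper, but the route is genuinely different. The paper works on the \emph{primal} side: it introduces the intermediate measure $\tilde\rho_\eps(t)=\X(t)_\#\rhoe^0$, splits $W_1(\rho,\rhoe)\le W_1(\rho,\tilde\rho_\eps)+W_1(\tilde\rho_\eps,\rhoe)$, and then controls the second term via a Gr\"onwall inequality on $B(t)=\int|\X(t,\x)-\gamma(t)|\,\eta_\eps(d\gamma,\x)\rhoe^0(\x)\,d\x$. Because $\ue$ is not Lipschitz, the paper has to invoke the superposition principle (a probabilistic representation $\eta_\eps$ on curves $\gamma$ solving $\gamma'=\ue(t,\gamma)$) to even make sense of ``flowing $\rhoe$ along $\ue$''. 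You work on the \emph{dual} side: you never flow anything along $\ue$; instead you transport the Kantorovich potential backward along the Lipschitz field $\u$ and use the continuity equation for $\rhoe$ only in its weak form against the Lipschitz test function $\psi_s$. This is more elementary---it completely sidesteps the superposition principle---while the paper's approach makes the Lagrangian picture (comparison of characteristic curves) more explicit. Your final inequality $w(t)\le e^{Mt}W_1(\rho^0,\rhoe^0)+\int_0^t e^{M(t-s)}\sqrt{2\etae(s)}\,ds$ matches the paper's combination of \eqref{eq:W11} and \eqref{eq:W12} line for line, so the constants agree up to harmless factors. The technical caveat you flag (justifying the differentiation of $g$ against merely Lipschitz $\psi_s$) is indeed the only place requiring care; it is handled by the $L^\infty$ bounds on $\rhoe,\rhoe\ue$ coming from Proposition~\ref{prop:MP} together with a routine mollification of $\psi_s$.
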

Some versions of Lemma \ref{lem:W1} have been developed in e.g. \cite[Lemma 5.2]{figalli2018rigorous} (control $W_2$ distance by $\etae$), and \cite[Lemma 5.2]{shvydkoy2021dynamics} (control $W_1$ distance by kinetic relative entropy). We include a proof of the Lemma here for self-consistency.

\begin{proof}[Proof of Lemma \ref{lem:W1}]
Consider the flow maps
$\Xe$ and $\X$ defined as
\[\begin{cases}
\pa_t\Xe(t,\x)=\ue(t,\Xe(t,\x)),\\
\Xe(0,\x)=\x,	
\end{cases}
\quad\text{and}\quad
\begin{cases}
\pa_t\X(t,\x)=\u(t,\X(t,\x)),\\
\X(0,\x)=\x.
\end{cases}
\]
The solutions $\rhoe$ and $\rho$ of the continuity equations can be viewed as the push-forward of the initial measure
\[\rhoe(t)=\Xe(t)_\#\rhoe^0,\quad\text{and}\quad
\rho(t)=\X(t)_\#\rho^0.\]
Note that since $\ue$ is not necessarily Lipschitz, $\Xe$ is not uniquely defined. The push-forward relation should be realized in the probabilistic sense, see e.g. \cite[Proposition 3.3]{figalli2018rigorous}. Namely, there exists a probability measure $\eta_\eps$, defined on $\Gamma_T\times\Omega$, where $\Gamma_T$ denotes the space of absolutely continuous curves from $[0,T]$ to $\Omega$, such that

(i) $\eta_\eps$ is concentrated on the set of pairs $(\gamma,\x)$ such that $\gamma$ is a solution of
\begin{equation}\label{eq:gamma}
\gamma'(t)=\u_\eps(t,\gamma(t)),\quad \forall~a.e.~t\in(0,T),\text{ with } \gamma(0)=\x,	
\end{equation}

(ii) $\rhoe$ satisfies
\[\int_\Omega \varphi(\x)\rhoe(t,\x)\,d\x=\int_{\Gamma_T\times\Omega}\varphi(\gamma(t))\,d\eta(\gamma,\x)=\int_{\Gamma_T\times\Omega}\varphi(\gamma(t))\,\eta_\eps(d\gamma,\x)\rhoe^0(\x)\,d\x,\]
for any test function $\varphi\in C_c^0(\Omega)$ and $t\in[0,T]$. Here $\eta_\eps(d\gamma,\x)$ is the (marginal) probability distribution on  functions in $\Gamma_T$ that solves \eqref{eq:gamma} with a given $\x$.

Let us define another measure
\[\tilde{\rho}_\eps(t)=\X(t)_\#\rhoe^0,\]
and decompose
\begin{equation}\label{eq:W1decomp}
W_1(\rho(t,\cdot),\rhoe(t,\cdot))\leq W_1(\rho(t,\cdot),\rhot(t,\cdot))+W_1(\rhot(t,\cdot),\rhoe(t,\cdot)).	
\end{equation}
For the first part, $\rho$ and $\tilde{\rho}_\eps$ shares the same flow. We obtain
\begin{align}
W_1(\rho(t,\cdot),\rhot(t,\cdot))=&\,\sup_{[g]_{Lip}\leq1}\int_\Omega g(\x)(\rho(t,\x)-\rhot(t,\x)	)\,d\x\nonumber\\
=&\,\sup_{[g]_{Lip}\leq1}\int_\Omega g(\X(t,\x))(\rho^0(\x)-\rhoe^0(\x)	)\,d\x\nonumber\\
\leq&\,\|\grad\X(t,\cdot)\|_{L^\infty}\,W_1(\rho^0,\rhoe^0)\leq e^{Mt}W_1(\rho^0,\rhoe^0).\label{eq:W11}
\end{align}
For the second part, $\rhot$ and $\rhoe$ shares the same initial data. We have
\begin{align*}
W_1(\rhot(t,\cdot),\rhoe(t,\cdot))=&\,\sup_{[g]_{Lip}\leq1}\int_\Omega g(\x)(\rhot(t,\x)-\rhoe(t,\x))\,d\x\\
=&\,\sup_{[g]_{Lip}\leq1}\int_\Omega \left(g(\X(t,\x))-\int_{\Gamma_T}g(\gamma(t))\eta_\eps(d\gamma,\x)\right)\rhoe^0(\x)\,d\x\\
\leq&\,\int_{\Gamma_T\times\Omega} \big|\X(t,\x)-\gamma(t)\big|\,\eta_\eps(d\gamma,\x)\cdot\rhoe^0(\x)\,d\x=:B(t).
\end{align*}
Clearly, $B(t)$ is continuous, and differentiable for almost every $t\in[0,T]$. Compute the time derivative of $B(t)$ and get
\begin{align*}
B'(t)\leq&\,\int_{\Gamma_T\times\Omega} \big|\u(t,\X(t,\x))-\ue(t,\gamma(t))\big|\,\eta_\eps(d\gamma,\x)\cdot\rhoe^0(\x)\,d\x\\
\leq&\,\int_{\Gamma_T\times\Omega} \big|\u(t,\X(t,\x))-\u(t,\gamma(t))\big|\,\eta_\eps(d\gamma,\x)\cdot\rhoe^0(\x)\,d\x\\
&+\int_{\Gamma_T\times\Omega} \big|\u(t,\gamma(t))-\ue(t,\gamma(t))\big|\,\eta_\eps(d\gamma,\x)\cdot\rhoe^0(\x)\,d\x\\
\leq&\, \|\grad\u(t,\cdot)\|_{L^\infty}\int_{\Gamma_T\times\Omega} \big|\X(t,\x)-\gamma(t)\big|\,\eta_\eps(d\gamma,\x)\cdot\rhoe^0(\x)\,d\x\\
&+\int_\Omega|\u(t,\x)-\ue(t,\x)|\rhoe(t,\x)\,d\x\\
\leq &\, M B(t)+\sqrt{\etae(t)}.
\end{align*}
Together with $B(0)=0$, we apply Gr\"onwall inequality and obtain 
\begin{equation}\label{eq:W12}
W_1(\rhot(t,\cdot),\rhoe(t,\cdot))\leq B(t)\leq \int_0^t e^{M(t-s)}\sqrt{\etae(s)}\,ds\leq \sqrt{t}e^{Mt}\left(\int_0^t\etae(s)\,ds\right)^{\frac12}.	
\end{equation}
Finally, we apply \eqref{eq:W11} and \eqref{eq:W12} to \eqref{eq:W1decomp}, yielding
\[W_1^2(\rho(t,\cdot),\rhoe(t,\cdot))\leq 2(1+t)e^{2Mt}\left(W_1^2(\rho^0,\rhoe^0)+\int_0^t\etae(s)\,ds\right).\]
This finishes the proof of \eqref{eq:W1bound}, with $C=C(T,M)=2(1+T)e^{2MT}$.
\end{proof}

The initial distance $W_1^2(\rho^0,\rhoe^0)$ is small due to the assumption \eqref{eq:initapp}. Indeed, we have
\begin{align*}
W_1(\rho^0,\rhoe^0)=&\,\sup_{[g]_{Lip}\leq1}\int_\Omega g(\x)(\rho^0(\x)-\rhoe^0(\x))\,d\x\\
=&\,\sup_{[g]_{Lip}\leq1}\int_{\Omega\times\R^d} g(\x)(f^0(\x,\v)-\fe^0(\x,\v))\,d\x d\v\leq W_1(f^0,\fe^0)<\eps.
\end{align*}

Adding \eqref{eq:etaestint2} and \eqref{eq:W1bound}, we arrive at the inequality
\[
\etae(t)+W_1^2(\rho(t,\cdot),\rhoe(t,\cdot))
    \leq C\left[\eps+\eps^{\frac{q}{2-q}}+\eps^2+\int_0^t\Big(\etae(s)+ W_1^2(\rho(s,\cdot),\rhoe(s,\cdot))\Big)\,ds\right].
\]
Applying Gr\"onwall inequality, we end up with
\begin{equation}\label{eq:rhouconv}
    \etae(t)+W_1^2(\rho(t,\cdot),\rhoe(t,\cdot)) \leq Ce^{Ct}(\eps^{\frac{q}{2-q}}+\eps+\eps^2)\xrightarrow{~~\eps\to0~~} 0,
\end{equation}
for any $t\in[0,T]$. Here, the power $\frac{q}{2-q}\in(0,\frac12]$ since $q\in(0,1]$.

\subsection{Proof of Theorem \ref{thm:main}}
Now we apply the estimate \eqref{eq:rhouconv} to obtain our main convergence result \eqref{eq:convergence}.

Let $g=g(\x,\v)$ be a test function such that $[g]_{Lip_{\x,\v}}\leq 1$. 
In the following calculation, we fix a time $t$ and suppress the $t$-dependence. Compute
\begin{align*}
&\int_{\Omega\times\R^d} g(\x,\v) (f(\x,\v)-\fe(\x,\v))\,d\x d\v=
\int_{\Omega\times\R^d} g(\x,\u(\x)) (f(\x,\v)-\fe(\x,\v))\,d\x d\v\\
&\quad +\int_{\Omega\times\R^d} (g(\x,\v)-g(\x,\u(\x))) (f(\x,\v)-\fe(\x,\v))\,d\x d\v=: K_1+K_2.
\end{align*}
We estimate term by term. For $K_1$, we have
\[
|K_1|=\,\left|\int_\Omega g(\x,\u(\x)) (\rho(\x)-\rhoe(\x))\,d\x\right|\leq (1+[\u]_{Lip})W_1(\rho,\rhoe).
\]
For $K_2$, note that
\[
\int_{\Omega\times\R^d} (g(\x,\v)-g(\x,\u(\x))) f(\x,\v)\,d\x d\v=\int_{\Omega\times\R^d} (g(\x,\u(\x))-g(\x,\u(\x))) \rho(\x)\,d\x=0.
\]
Therefore,
\begin{align*}
|K_2|=&\,\left|\int_{\Omega\times\R^d} (g(\x,\v)-g(\x,\u(\x))) \fe(\x,\v)\,d\x d\v\right|
\leq\int_{\Omega\times\R^d}|\v-\u(\x)|\fe(\x,\v)\,d\x d\v\\
\leq&\,\int_{\Omega\times\R^d}|\v-\ue(\x)|\fe(\x,\v)\,d\x d\v+\int_{\Omega}|\ue(\x)-\u(\x)|\rhoe(\x)\,d\x\leq\Dee^{\frac12}+\etae^{\frac12}.
\end{align*}
Combine the estimates above and take supreme over all test functions $g$. We obtain
\[
W_1(f,\fe)\leq (1+M) W_1(\rho,\rhoe) + \etae^{\frac12} +\Dee^{\frac12}.
\]

From \eqref{eq:energydecay}, we have the control
\[\int_0^T\Dee(t)\,dt\leq\eps\,\Ee^0\leq C\eps.\]
Together with \eqref{eq:rhouconv}, we deduce the bound
\begin{equation}\label{eq:rhouconvrate}
\int_0^T W_1^2(f(t),\fe(t))\,dt
\leq C(\eps^{\frac{q}{2-q}}+\eps+\eps^2)\xrightarrow{~~\eps\to0~~} 0,	
\end{equation}
where the constant $C$ depends on $p, T, M$ and initial data. Therefore, we conclude with the convergence
\[W_1(f(t),\fe(t))\to0,\quad \text{in}~L^2(0,T).\]
This leads to the convergence result \eqref{eq:convergence}. To see this, we consider a test function $g=g(t,\x,\v)$ in $C([0,T], Lip(\Omega\times\R^d))$. Thus computing,
\begin{align*}
 &\,\int_0^T\int_{\Omega\times\R^d}g(t,\x,\v)(f(t,\x,\v)-\fe(t,\x,\v))\,d\x d\v dt\\
 \leq&\,\int_0^t [g(t)]_{Lip_{\x,\v}} W_1(f(t),\fe(t))\,dt
 \leq \big\|[g(t)]_{Lip_{\x,\v}}\big\|_{L^2(0,T)}\big\|W_1(f(t),\fe(t))\big\|_{L^2(0,T)}\to0.
\end{align*}
Note that Lipschitz functions in a bounded domain are dense in the space of continuous functions.
From Proposition \ref{prop:MP}, we know that the measures $f(t)$ and $\fe(t)$ are compactly supported. Then we apply the density argument and obtain convergence for any test function $g\in C_c([0,T]\times\Omega\times\R^d)$.

\begin{remark}
 We would like to mention another type of relative entropy, referred to as the \emph{kinetic relative entropy}:
  \[\etae^K(t) = \frac12\int_{\Omega\times\R^d} |\v-\u(t,\x)|^2\fe(t,\x,\v)\,d\x d\v.\]
 In \cite{shvydkoy2021dynamics}, Shvydkoy applies the relative entropy method based on $\etae^K$ to derive the hydrodynamic limit for the kinetic flocking model with linear velocity alignment ($p=2$). This approach can be adapted to the nonlinear case ($p>2$), using a similar argument to handle the discrepancy term. 
\end{remark}

\section{Isothermal hydrodynamic limit}\label{sec:isothermal}
In this section, we discuss another type of hydrodynamic limit, by isothermal ansatz \eqref{eq:isothermal}. The main point we would like to make is that the alignment force $\A[\rho,\u]$ that appears in the limiting system 
\[\pa_t(\rho\u)+\grad_\x\cdot(\rho\u\otimes\u)+\grad_\x p=\rho\A[\rho,\u]\]
does not necessarily share the same mapping $\Phi$ as the kinetic equation \eqref{eq:feFP}. 

To illustrate this point, we perform the following formal calculation.

Multiplying \eqref{eq:feFP} by $\v$ and integrating in $\v$, we obtain the momentum equation \eqref{eq:momentumeps}, with the right-hand side 
\[
\mathcal{F}(t,\x):=\int_{\R^d}\F(\fe)\fe\,d\v=\int_{\Omega\times\R^{2d}}\phi(\x-\y)\Phi(\w-\v)f(t,\x,\v)f(t,\y,\w)\,d\y d\v d\w.
\]
We suppress the $t$-dependency in the following calculation for simplicity. Applying isothermal ansatz \eqref{eq:isothermal}, we get
\begin{align*}
\mathcal{F}(\x)=&\,(2\pi)^{-d}\int_{\Omega\times\R^{2d}}\phi(\x-\y)\Phi(\w-\v)e^{-\frac{|\v-\u(\x)|^2+|\w-\u(\y)|^2}{2}}\rho(\x)\rho(\y)\,d\y d\v d\w\\
=&\,\frac{\rho(\x)}{(2\pi)^d}\int_{\Omega\times\R^{2d}}\phi(\x-\y)\Phi\big((\w-\v)+(\u(\y)-\u(\x))\big)e^{-\frac{|\v|^2+|\w|^2}{2}}\rho(\y)\,d\y d\v d\w.
\end{align*}
Substituting $\mathbf{a}=\w-\v$ and $\mathbf{b}=\w+\v$ yields
\begin{align*}
\mathcal{F}(\x)=&\,\frac{\rho(\x)}{(4\pi)^d}\int_{\Omega\times\R^{2d}}\phi(\x-\y)\Phi\big(\mathbf{a}+(\u(\y)-\u(\x))\big)e^{-\frac{|\mathbf{a}|^2+|\mathbf{b}|^2}{4}}\rho(\y)\,d\y d\mathbf{a} d\mathbf{b}\\
=&\,\frac{\rho(\x)}{(4\pi)^{d/2}}\int_{\Omega\times\R^d}\phi(\x-\y)\Phi\big(\mathbf{a}+(\u(\y)-\u(\x))\big)e^{-\frac{|\mathbf{a}|^2}{4}}\rho(\y)\,d\y d\mathbf{a}\\
=&\,\rho(\x)\int_\Omega\phi(\x-\y)\Psi(\u(\y)-\u(\x))\rho(\y)\,d\y,
\end{align*}
where the mapping $\Psi$ is defined as
\begin{equation}\label{eq:Psi}
 \Psi(\z) = \frac{1}{(4\pi)^{d/2}}\int_{\R^d}\Phi(\mathbf{a}+\z)e^{-\frac{|\mathbf{a}|^2}{4}}\,d\mathbf{a}=(\Phi\ast\mathcal{M}) (\z),
\end{equation}
where $\ast$ stands for convolution, and the function $\mathcal{M}$ is defined as 
\[\mathcal{M}(\z)=\frac{1}{(4\pi)^{d/2}}e^{-\frac{|\z|^2}{4}}.\]

One easy observation is that $\Psi$ is an odd mapping like $\Phi$. When $\Phi(\z)=\z$ is linear, we have
\[\Psi(\z)=\frac{1}{(4\pi)^{d/2}}\int_{\R^d}(\mathbf{a}+\z)e^{-\frac{|\mathbf{a}|^2}{4}}\,d\mathbf{a}=0+1\cdot\z=\z.\]
However, when $\Phi$ is nonlinear, the mapping $\Psi$ is \emph{not} the same as $\Phi$. To illustrate this point, we perform the following explicit calculate on $\Psi$ in the case: 
\[\Phi(z) = |z|^{p-2}z = z^{2k-1},\]
where $p=2k$ is an even integer, and $d=1$. Compute
\begin{align*}
 \Psi(z) & = \int_\R (z+a)^{2k-1}\mathcal{M}(a)\,da=\sum_{j=0}^{2k-1}{2k-1\choose j}z^{2k-1-j}\int_\R a^j\mathcal{M}(a)\,da\\
 & = \sum_{j=0}^{k-1}{2k-1\choose 2j}\frac{2^{2j}\Gamma(j+\frac12)}{\Gamma(\frac12)}\,z^{2k-1-2j},
\end{align*}
where we have used the following identity on moments of normal distribution:
\[
\int_\R z^k\mathcal{M}(z)\,dz = \begin{cases}
 \frac{2^k\Gamma(\frac{k+1}{2})}{\Gamma(\frac{1}{2})}& k\text{ is even,}\\
 0& k\text{ is odd.} 	
\end{cases}
\]
For example, when $\Phi(z) = z^3$ ($p=4$, or $k=2$), we have
\[\Psi(z) = z^3 + 6z.\] 
Unlike $\Phi$, the nonlinearity $\Psi$ behaves like $\mathcal{O}(z)$ instead of $\mathcal{O}(z^3)$ near $z=0$. Therefore, the hydrodynamic limit of \eqref{eq:feFP} does \emph{not} have the same $p$-alignment force.

The rigorous justification of the isothermal hydrodynamic limit will be left for future investigation.

\bibliographystyle{plain}
\bibliography{bib_hydro}

\end{document}